\def\norm#1{\|#1\|}
\newcommand{\Hl}{\mathcal{H}}
\newcommand{\ball}{\mathbb{S}}
\newcommand{\R}{\mathbb{R}}
\newcommand{\supp}{\operatorname{supp}}
\newcommand{\les}{\lesssim}
\newcommand{\Del}[1]{}
\numberwithin{equation}{section}
\newtheorem{thm}{Theorem}[section]
\newtheorem{cor}[thm]{Corollary}
\newtheorem{lem}[thm]{Lemma}
\theoremstyle{remark}
\newtheorem{rem}{Remark}
\begin{document}

\title[Estimate for the adjoint restriction operator]{A Sobolev estimate for the adjoint \\ restriction operator}

\author[Y. Cho]{Yonggeun Cho}
\author[Z. Guo]{Zihua Guo}
\author[S. Lee]{Sanghyuk Lee}

\address{Yonggeun Cho, Department of Mathematics, and Institute of Pure and Applied Mathematics, Chonbuk National University, Jeonju 561-756, Republic of Korea}
\email{changocho@jbnu.ac.kr}
\address{Zihua Guo, School of Mathematical Sciences, Peking
University, Beijing 100871, China} \email{zihuaguo@math.pku.edu.cn}
\address{Sanghyuk Lee, Department of Mathematical Sciences, Seoul National University, Seoul 151-747, Republic of Korea}
\email{shklee@snu.ac.kr}

\subjclass[2010]{42B10, 42B25} \keywords{Restriction estimate,
Strichartz estimate}

\begin{abstract} In this note we consider the adjoint restriction estimate
for hypersurface under additional regularity assumption. We  obtain
the optimal $H^s$-$L^q$  estimates  and their mixed norm
generalizations. As applications we prove some weighted Strichartz
estimates for the propagator $\varphi\to e^{it(-\Delta)^{ \alpha/
2}}\varphi$, $\alpha>0$.
\end{abstract}

\maketitle

\section{Introduction}
The Fourier extension operator (the adjoint of restriction operator)
$R^*\!$ for the sphere is defined by
\[R^*\!f(x) =\int_{\ball^{d}}e^{ix\cdot\xi}f(\xi)d\sigma(\xi), \quad  x\in \mathbb R^{d+1}.\]
Here $\ball^{d}$ denotes the unit sphere in $\mathbb R^{d+1}$ and
$d\sigma$ is the induced Lebesgue measure on $\ball^{d}$. The
problem which is known as the restriction problem for the sphere is
to determine the range of $p,$ $q$ for which
\begin{align}\label{eq:Epq}
\norm{R^*\!f}_{L^q(\mathbb R^{d+1})}\leq C \norm{f}_{L^p(\ball^d)}
\end{align}
holds. As can be easily seen by Knapp's example and the asymptotic
expansion of $\widehat{d\sigma}$, \eqref{eq:Epq} holds only if
$q>\frac{2(d+1)}{d}$ and
\[(d+2)/q\le d(1-1/p).\]
 When $d=1$, \eqref{eq:Epq} on the optimal range was obtained by
Zygmund \cite{zy} (see \cite{fef} for an earlier result due to
Fefferman and Stein \cite{fef}). It has been conjectured that the
necessary condition is sufficient for \eqref{eq:Epq} in higher
dimensions but it still remains open.
When $p=2$, the sharp boundedness is due to Tomas \cite{to} and
Stein \cite{st1}. The result beyond Stein-Tomas range was first
obtained by Bourgain when $d=2$, and further progresses were made by
the works of  Wolff \cite{w1}, Tao, Vargas and Vega \cite{tvv}, Tao
and Vargas \cite{tv1},  and Tao \cite{ta1}. (Also see \cite{w2, lee,
v, lv} for results regarding different types of hypersurfaces.)
Recently, Bourgain and Guth \cite{bogu} improved the range.
Especially, when $p=q$, the estimate \eqref{eq:Epq} was shown  to be
true for $p\in(56/17,\infty]$ in $\mathbb R^{2+1}$ (see
\cite[p.1265]{bogu}) and for $p\in(p_{\circ}(d),\infty]$ in $\mathbb
R^{d+1}$, $d\ge 3$ where $p_{\circ}(d)=2+ 12/(4d+1-k)$ if $
d+1\equiv k ~(mod~ 3),\ k=-1, 0,1$. Their result relies on the
multilinear restriction estimate in Bennett, Carbery and Tao
\cite{BCT}.
A further improvement in $\mathbb R^{2+1}$ which gives
\eqref{eq:Epq} for $p=q\in (3.25, \infty]$ was obtained by Guth
\cite{Guth}.

In this note, we consider the estimate \eqref{eq:Epq} from a
different perspective. Let us denote by $\Hl^s$ the $L^2$ Sobolev
space of order $s$ on the sphere. The main purpose of this paper is
to study the restriction estimate
\begin{equation}
\label{hs}
 \norm{R^*\!f}_{q}\leq C
\norm{f}_{\Hl^{s}}
\end{equation}
and to find the optimal range of $s,q$ for which \eqref{hs} holds.
When $s=0$, by the necessary condition and Tomas-Stein theorem
\eqref{hs} holds if and only if $q\ge 2(d+2)/d$. It is natural to
expect that the range of $q$ gets wider if  $f$ is assumed to have
an additional regularity, that is to say $s>0$. However, for $q\le
{2(d+1)}/{d}$ this estimate fails because $\widehat {d\sigma}\not
\in L^q$. Hence, the estimate \eqref{hs} is of interest for $q$
satisfying  $ {2(d+1)}/{d}<q< 2(d+2)/d$. By the Knapp type example
again, it can be shown (see the paragraph below Theorem \ref{xed})
that \eqref{hs} is possible only if
\begin{equation}\label{nec}
s\ge s_q= s_q(d)=:\frac{d+2}{q}-\frac{d}{2}.
\end{equation}

The estimate \eqref{hs} for $s=s_q$  can be deduced from the sharp
restriction estimate (\eqref{eq:Epq} with $(d+2)/q= d(1-1/p)$) by
making use of the embedding $\Hl^{\alpha}(\ball^{d}) \subset
L^p(\ball^{d})$ with $\alpha\ge d/{2}-d/{p}$ and $p\neq \infty$.
Since we have the sharp restriction estimate for $d=1$, we get
\eqref{hs} for $4<q<6$ with the optimal regularity. In higher
dimensions it seems natural to expect that the estimate \eqref{hs}
holds whenever ${2(d+1)}/{d}<q< 2(d+2)/d$ and \eqref{nec} is
satisfied. As it turns out, this is indeed the case and the problem
is much easier than the restriction estimate \eqref{eq:Epq}. This is
mainly  due to the fact that the inequality \eqref{hs} is based on
$L^2$-spaces.

\medskip

The following is our first result.

\begin{thm}\label{thm-sphere}
Let $d\geq1$ and $q<{2(d+2)}/{d}$. Then \eqref{hs} holds if and only
if $s\ge s_q$ and $q>2(d+1)/d$.
\end{thm}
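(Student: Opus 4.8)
We may assume $s=s_q$, since $\|f\|_{\mathcal H^{s'}}\ge\|f\|_{\mathcal H^{s}}$ whenever $s'\ge s$. Necessity is quick: $s\ge s_q$ is the Knapp example indicated below the statement, while for $q\le 2(d+1)/d$ one takes $f\equiv1$, so that $R^*f=\widehat{d\sigma}$ and $\|f\|_{\mathcal H^{s}}\lesssim1$ for every $s$, whereas $|\widehat{d\sigma}(x)|\sim|x|^{-d/2}$ lies in $L^q(\R^{d+1})$ precisely when $q>2(d+1)/d$. So the whole content is the bound $\|R^*f\|_q\lesssim\|f\|_{\mathcal H^{s_q}}$ for $2(d+1)/d<q<2(d+2)/d$. (For $d=1$ this is the shortcut via the sharp restriction estimate and Sobolev embedding recalled above; the plan below will work for all $d$.)

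The plan is to decompose $f$ in frequency on $\mathbb S^d$ and $\R^{d+1}$ in dyadic annuli, and to reassemble using the $L^2$ nature of the norm on the right. Write $f=\sum_\lambda f_\lambda$ with $f_\lambda$ spectrally localized to $\sqrt{-\Delta_{\mathbb S^d}}\sim\lambda$ ($\lambda\ge1$ dyadic, the piece $\lambda\lesssim1$ being disposed of directly since then $R^*f_\lambda$ decays like $\widehat{d\sigma}$), so that $\|f\|_{\mathcal H^{s_q}}^2\sim\sum_\lambda\lambda^{2s_q}\|f_\lambda\|_2^2$; and let $A_R=\{|x|\sim R\}$, $R\ge1$ dyadic. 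The heart of the argument will be the estimate
\[
\|R^*f_\lambda\|_{L^q(A_R)}\ \lesssim\ w(R,\lambda)\,\lambda^{s_q}\|f_\lambda\|_{L^2(\mathbb S^d)},\qquad w(R,\lambda)=\min\{(R/\lambda^2)^{s_q/2},\ (R/\lambda^2)^{-(d/2-(d+1)/q)}\},
\]
where both exponents are positive in the range $2(d+1)/d<q<2(d+2)/d$, so that $w(R,\lambda)$ is exponentially small in $|\log(R/\lambda^2)|$ and is $\sim1$ only when $R\sim\lambda^2$. Granting this, set $a_\lambda=\lambda^{s_q}\|f_\lambda\|_2$; summing in $\lambda$ gives $\|R^*f\|_{L^q(A_R)}\le\sum_\lambda\|R^*f_\lambda\|_{L^q(A_R)}\lesssim\sum_\lambda w(R,\lambda)a_\lambda$, and since $\sum_\lambda w(R,\lambda)$ and $\sum_R w(R,\lambda)$ are uniformly bounded (the exponents being positive), Schur's test yields $\big\|\,\|R^*f\|_{L^q(A_R)}\,\big\|_{\ell^2_R}\lesssim\|a\|_{\ell^2}=\|f\|_{\mathcal H^{s_q}}$. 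Because $q>2$ we have $\ell^2_R\hookrightarrow\ell^q_R$, so
\[
\|R^*f\|_{L^q(\R^{d+1})}=\big\|\,\|R^*f\|_{L^q(A_R)}\,\big\|_{\ell^q_R}\le\big\|\,\|R^*f\|_{L^q(A_R)}\,\big\|_{\ell^2_R}\lesssim\|f\|_{\mathcal H^{s_q}} ,
\]
which is the theorem. Trading $\ell^q$ for $\ell^2$ in the radial scale is exactly where the ``$L^2$-based'' feature enters.

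For the key estimate, split into $R\lesssim\lambda^2$ and $R\gtrsim\lambda^2$. In the first range: interpolating (by H\"older on $A_R$) the Stein--Tomas bound $\|R^*g\|_{L^{2(d+2)/d}(\R^{d+1})}\lesssim\|g\|_2$ with the elementary local estimate $\|R^*g\|_{L^2(B_R)}\lesssim R^{1/2}\|g\|_2$ (obtained by pairing $|\widehat{gd\sigma}|^2$ with a Schwartz bump adapted to $B_R$ and using that an $R^{-1}$-cap of $\mathbb S^d$ has $\sigma$-measure $\sim R^{-d}$) yields $\|R^*f_\lambda\|_{L^q(A_R)}\lesssim R^{s_q/2}\|f_\lambda\|_2=(R/\lambda^2)^{s_q/2}\lambda^{s_q}\|f_\lambda\|_2$. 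In the second range: the stationary phase expansion of $\widehat{d\sigma}$ gives, for $|x|\gtrsim\lambda^2$ (so that $f_\lambda$ is essentially constant on the $|x|^{-1/2}$-neighbourhoods of the two stationary points $\pm x/|x|$),
\[
R^*f_\lambda(x)=|x|^{-d/2}\big(e^{i|x|}f_\lambda(-x/|x|)+e^{-i|x|}f_\lambda(x/|x|)\big)+(\text{lower order}),
\]
hence $\|R^*f_\lambda\|_{L^q(A_R)}\lesssim R^{(d+1)/q-d/2}\|f_\lambda\|_{L^q(\mathbb S^d)}\lesssim R^{(d+1)/q-d/2}\lambda^{d/2-d/q}\|f_\lambda\|_2$ by the Bernstein inequality on $\mathbb S^d$; a direct computation identifies the right-hand side with $(R/\lambda^2)^{(d+1)/q-d/2}\lambda^{s_q}\|f_\lambda\|_2$, and $(d+1)/q-d/2<0$ precisely for $q>2(d+1)/d$. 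The two bounds agree at $R\sim\lambda^2$, which makes $w$ continuous and peaked there.

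I expect the second range to be the main obstacle. What is needed there is not merely decay in $R$ but the precise power $\lambda^{d/2}$ (not the crude $\lambda^{(d+1)/2}$ one would get by summing absolute values in Bochner's formula $\widehat{Y_k\,d\sigma}(x)=c_d\,i^{-k}|x|^{-(d-1)/2}J_{k+(d-1)/2}(|x|)Y_k(x/|x|)$): only the phase $i^{-k}$, together with the two-sided Bessel asymptotics, collapses the sum over degrees onto the two antipodal stationary points and produces $\lambda^{d/2}$, which is exactly the exponent for which the two regimes meet at the sharp power $\lambda^{s_q}$ — with anything weaker the scales would fail to sum. The error terms of the expansion in the intermediate range $\lambda\lesssim|x|\lesssim\lambda^2$ need not be controlled, since the interpolation bound already covers that range; so apart from this single clean asymptotic for $|x|\gtrsim\lambda^2$, and the conceptual point that every annular/frequency piece is individually sharp (Knapp-saturated) so that all the slack lives in the log-scale summability, the argument is soft — consistent with the remark that the estimate is easier than the restriction problem.
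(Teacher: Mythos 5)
Your argument is correct, but it takes a genuinely different route from the paper. The paper first establishes the mixed-norm Theorem~\ref{xed} for graph extension operators $E_\phi$, and Theorem~\ref{thm-sphere} falls out via coordinate patches. That proof uses the Euclidean Littlewood--Paley pieces $P_kf$ (with $f$ viewed as a function on $\mathbb R^d$), a three-way splitting of the time variable at $t\sim 2^k$ and $t\sim 2^{2k}$, and a square-function argument in $x$; the mechanism that makes the square function run is the localization lemma asserting that, after removing the stationary-phase factor $e^{it\psi(x/t)}$, the $x$-Fourier transform of $\widetilde E_\psi P_kf(t\,\cdot,t)$ lives at frequency $\sim 2^k$ when $t\gg 2^k$. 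You instead work directly on the sphere with spherical harmonic blocks $f_\lambda$ and dyadic spatial annuli $A_R$, and you replace the square-function/orthogonality step by a Schur test in $(R,\lambda)$ against a weight $w(R,\lambda)$ decaying geometrically off the diagonal $R\sim\lambda^2$, together with $\ell^2_R\hookrightarrow\ell^q_R$. Your two regimes --- Stein--Tomas interpolated with the local $L^2$ bound for $R\lesssim\lambda^2$, and Bochner's formula with the leading Bessel asymptotic plus spherical Bernstein for $R\gtrsim\lambda^2$ --- mirror the paper's middle range $[C2^k,C2^{2k}]$ (handled by the interpolation bound \eqref{interpol}) and far range $[C2^{2k},\infty)$ (handled by the kernel estimate \eqref{lplp}), and in both proofs the crux of the far regime is the \emph{coherence} of the stationary-phase main term: you extract it from the phase $i^{-k}$ in Bochner's formula (which collapses the sum over degrees onto $f_\lambda(\pm x/|x|)$, yielding Bernstein's $\lambda^{d/2-d/q}$ rather than the lossy pointwise bound), while the paper extracts it through $\widetilde E_\psi$ and the ensuing $L^r\to L^r$ kernel estimate. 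Your version is more elementary and self-contained but is specific to the sphere and to the unmixed $L^q$ norm; the paper's version buys the mixed-norm generalization and applies to arbitrary hypersurfaces with non-vanishing Gaussian curvature, which is exactly what the Strichartz corollaries require. One small caution worth flagging: the Hankel asymptotic $J_\nu(r)\sim\sqrt{2/(\pi r)}\cos(r-\nu\pi/2-\pi/4)$ with controlled error holds only for $r\gg\nu^2$, so your far-regime estimate should be stated for $R\ge B\lambda^2$ with $B$ large, and the transitional band $\lambda^2\le R\le B\lambda^2$ absorbed into the interpolation bound (where the weight only loses the fixed constant $B^{s_q/2}$); you note this implicitly, but it is the one place the argument needs a constant to be tracked.
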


\subsection*{\it Mixed norm generalization}
We now consider a more general class of operators. Let $\phi$ be a
smooth function. Then  we define an extension operator $ E$ by
\[E f(x,t)= E_\phi f(x,t)= \int e^{i(x\cdot \xi+t\phi(\xi))} a(\xi) f(\xi) d\xi,
\  \  (x,t)\in \mathbb R^d\times \mathbb R, \]
where $a$ is a smooth function supported in $B(0,1)$. We denote by
$B(z,r)$ the ball of radius $r$ which is centered at $z$.
Generalizing \eqref{hs} we consider the estimate
 \begin{equation}\label{mixed} \|E f\|_{L_t^q(\mathbb R,\,
L_x^r(\mathbb R^d))}\le C\|f\|_{H^{s}}.
\end{equation}
Here $H^s$ is the usual inhomogeneous Sobolev space of order $s$. If
the hessian matrix $H\phi$ of $\phi$ is nonsingular on the support
of $a$, then by the Strichartz estimate \eqref{mixed} holds for
$s=0$ and $q,r$ satisfying $2\leq q,r\leq \infty$, $d/r+2/q\le d/2$,
$(q,r,d)\neq (2,\infty,2)$ (see \cite{kt}). Let us set
\begin{equation}\label{sc} s_c=s_c(r,q,d)=\frac{d}r+\frac
2q-\frac{d}2\,.\end{equation}

As a mixed norm generalization of \eqref{hs} we obtain the
following.

\begin{thm}\label{xed} Suppose $\det H \phi\neq 0$ on the support of $a$.
If $2\le q,r<\infty$,  $d/r+2/q > d/2$ and $d/r+1/q < d/2$, then
\eqref{mixed} holds whenever $s\ge s_c$.
 If $d/r+1/q = d/2$ and $q\neq 2$, then the weak type
estimate $\| E f\|_{L_t^{q,\infty}(\mathbb R,\, L_x^r(\mathbb
R^d))}\le C\|f\|_{H^{s_c}}$ holds.
\end{thm}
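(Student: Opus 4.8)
The plan is to interpolate between the endpoint Strichartz estimate (the case $s_c=0$, i.e. $d/r+2/q=d/2$, which holds by \cite{kt}) and a crude high-regularity estimate obtained by Sobolev embedding, and then to sharpen the result by a $TT^*$/Littlewood--Paley argument so as to reach the full range $s\ge s_c$ rather than only $s>s_c$. First I would reduce matters to a dyadic piece: writing $f=\sum_{\lambda}f_\lambda$ with $\ha f_\lambda$ supported in $\{|\xi|\sim\lambda\}$ (together with the low-frequency piece), it suffices by the triangle inequality and almost-orthogonality to prove a frequency-localized bound with the $H^s$ norm replaced by $\lambda^{s}\|f_\lambda\|_2$ and to sum in $\lambda$; this is where the strict inequality $d/r+1/q<d/2$ will be used, since it forces $s_c<1$ and leaves room for a geometric-series summation. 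On a single dyadic block one rescales $\xi\mapsto\lambda\xi$, turning $E_\phi$ into an extension operator attached to the rescaled phase $\phi(\lambda\,\cdot)/\lambda$ on a unit ball; the nonvanishing Hessian hypothesis is scale-invariant, so after accounting for the Jacobian and the anisotropic scaling in $(x,t)$ one is reduced to a fixed scale.

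The core step is then a bilinear/$TT^*$ estimate at unit frequency. Form $EE^*$, whose kernel is $\int e^{i(x-y)\cdot\xi+i(t-s)\phi(\xi)}|a(\xi)|^2\,d\xi$; by stationary phase and the nondegeneracy of $H\phi$ this kernel is $O((1+|t-s|)^{-d/2})$ uniformly in $x-y$, and more precisely is, for each fixed $t-s$, a nice oscillatory function of $x-y$. I would interpolate the two available pieces of information about this kernel: (i) fixed-time $L^{r'}_x\to L^{r}_x$ boundedness with the $(1+|t-s|)^{-d(1/2-1/r)}$ dispersive decay, and (ii) trivial $L^1_x\to L^\infty_x$ and $L^2$ bounds, then apply the Hardy--Littlewood--Sobolev inequality in the $t$ variable. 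Because $d/r+1/q<d/2$ the relevant exponent in HLS is strictly in the open range, giving the strong-type bound; exactly at $d/r+1/q=d/2$ the HLS inequality degenerates to its weak-type endpoint (the convolution with $|t-s|^{-1}$ being of weak type $(q,q)$ after the $L^r_x$ reduction), which is the source of the $L^{q,\infty}_t$ conclusion and the reason $q\neq2$ must be excluded. Alternatively, one can cite a known fractional-integration or "retarded" Strichartz estimate with the inhomogeneous power $(-\Delta)^{\alpha/2}$ replaced here by loss of $s_c$ derivatives, but I would rather keep the argument self-contained via HLS.

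I expect the main obstacle to be bookkeeping the scaling: the operator $E_\phi$ is not scale-invariant (the phase $\phi$ is only assumed smooth with nondegenerate Hessian, not homogeneous), so after dyadic localization the rescaled phases $\phi_\lambda(\xi)=\phi(\lambda\xi)/\lambda$ form a family rather than a single phase, and one must check that all the constants in the stationary-phase/dispersive estimates are uniform over this family. This is genuinely true because the estimates depend only on lower bounds for $|\det H\phi|$ and upper bounds for finitely many derivatives of $\phi$ on the (fixed, compact) support of $a$, all of which are preserved under the rescaling, but it requires a uniform van der Corput-type lemma. Once that uniformity is in hand, summing the dyadic estimates $2^{js}\|f_j\|_2$ against $\|f\|_{H^s}$ is immediate from Cauchy--Schwarz in $j$ when $s=s_c$ (no $\varepsilon$ loss, since the frequency-localized bound already carries the sharp power $\lambda^{s_c}$), and the low-frequency piece is handled trivially since $E_\phi$ maps $L^2$ boundedly into $L^q_tL^r_x$ on any compact frequency set when $s_c\le 0$ fails — here one simply uses that on $|\xi|\lesssim1$ all norms are comparable. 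This completes the reduction; the weak-type statement follows the same scheme with Lorentz-space HLS at the endpoint.
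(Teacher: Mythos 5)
Your central step---the $TT^*$/Hardy--Littlewood--Sobolev argument with the dispersive bound $\|E(t)E(s)^*\|_{L^{r'}_x\to L^r_x}\lesssim|t-s|^{-d(1/2-1/r)}$---gives exactly the Strichartz-\emph{admissible} range $d/r+2/q\le d/2$ with $s=0$: the HLS exponents in $t$ match only when $d/r+2/q=d/2$, and the theorem lives entirely in the complementary regime $d/r+2/q>d/2$. You never explain how the Sobolev regularity of $f$ enters the $TT^*$ computation, and in fact it cannot, since the kernel of $EE^*$ is independent of $f$. The rescaling $\xi\mapsto\lambda\xi$ is also misapplied: the Littlewood--Paley decomposition is taken in the variable dual to $\xi$ (i.e.\ $(P_kf)^\vee$ is supported in $|y|\sim 2^k$), while $\xi$ itself stays in the fixed unit ball where $a$ is supported, and the phase $\phi$ has no scaling symmetry, so there is no unit-scale reduction of the kind you describe. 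Finally, even granting a single-block estimate $\|EP_kf\|_{L^q_tL^r_x}\lesssim 2^{ks_c}\|P_kf\|_2$, your proposed summation by Cauchy--Schwarz in $k$ loses a derivative: $\sum_k 2^{ks_c}\|P_kf\|_2$ is \emph{not} controlled by $(\sum_k 2^{2ks_c}\|P_kf\|_2^2)^{1/2}$ without an extra decaying factor, so one cannot get $s=s_c$ that way.

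The paper's proof supplies precisely the ingredients your plan is missing. After the stationary-phase asymptotics of the kernel $K(x,t)$ (Lemma~\ref{asymp}), the operator is replaced by $\widetilde E_\psi$, and the two key lemmas are: (i) a Fourier-support lemma showing that $\widetilde E_\psi P_kf(t\cdot,t)$ has (essential) Fourier support in $\{|\xi|\sim 2^k\}$ once $t\gg2^k$, which restores almost-orthogonality and lets one pass to a Littlewood--Paley square function, thereby summing in $k$ without loss; and (ii) the long-time bound $\|\widetilde E_\psi P_kf(t\cdot,t)\|_{L^r_x}\lesssim t^{-d/2}\|P_kf\|_r$ for $t\ge C2^{2k}$, obtained from the diffeomorphism $x\mapsto\nabla\psi(x)$, which combined with Bernstein $\|P_kf\|_r\lesssim 2^{k(d/2-d/r)}\|P_kf\|_2$ produces the sharp power $2^{ks_c}$. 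The short intervals $[1,C2^{2k}]$ are handled by the interpolated Strichartz bound \eqref{interpol} with a gain in $2^k$, and the weak-type endpoint follows because $t^{-1/q}\in L^{q,\infty}$ and $L^{q/2,\infty}$ is a Banach space when $q>2$, not from a weak-type HLS. So the proposal is not a different route to the same theorem: the $TT^*$/HLS scheme you outline stops at the admissible range and the crucial frequency-localization mechanism is absent.
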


Theorem \ref{thm-sphere} is an obvious consequence of Theorem
\ref{xed} as this can be easily shown by making use of proper
coordinate patches (for example, see \cite[Sec. 3, Ch. 4]{taylor}).
The estimate \eqref{mixed} is no longer true if $d/r+1/q\ge d/2$
because $ E_\phi(1)\not\in L^q_tL^r_x$, which follows from Lemma
\ref{asymp}. And the regularity condition is also optimal since
\eqref{mixed} fails if $s<s_c$. Indeed, let us take
$f(\xi)=\lambda^{\frac{d}2}\eta(\lambda\xi)$ with a  compactly
supported smooth $\eta$  so that $\|f\|_{H^s}\lesssim \lambda^s$.
Then it is easy to see that $|Ef(x,t)|\gtrsim \lambda^{-\frac{d}2}$
if $|t|\le c\lambda^2$ and $|x+\nabla \phi(0)t|\leq c\lambda$ for a
small $c>0$. Hence \eqref{mixed} implies
\[ \lambda^{-\frac{d}2}\lambda^{\frac{d}r+\frac 2q}\lesssim \lambda^s. \]
Letting $\lambda\to\infty$, we get the condition \eqref{nec}.

\begin{rem}\label{remark1}
Since $E$ is a localized operator,  in Theorem \ref{xed}
we may replace $H^{s}$ by the homogeneous Sobolev space $\dot H^{s}$ provided that $0<s<d/2$.
The same is also true for Theorem \ref{thm-sphere}. Indeed, it is easy to see that  for $0<s<d/2$
\[\norm{f\beta}_{H^{s}}\les \norm{f}_{\dot H^s},\]
where $\beta\in C_0^\infty(\R^d)$.
\end{rem}

\subsubsection*{Application to Strichartz estimates} The adjoint
restriction estimates are closely related to the Strichartz
estimates. Especially, various space time estimates for the
Schr\"odinger operator can be deduced from restriction estimates for
the paraboloid. (See \cite{lrv} for a detailed discussion.) As
applications of Theorem \ref{xed} we obtain some weighted Strichartz
estimates on the range where the usual Strichartz estimates are not
allowed.

Let $\alpha>0$ and $\alpha\neq 1$. We first consider the  weighted Strichartz estimates for
the fractional Schr\"odinger operator
\begin{align}\label{w-str}
\|e^{it(-\Delta)^{ \alpha/ 2}}\varphi\|_{L_t^q(\mathbb R,\,
L_x^r(\mathbb R^d))}\lesssim
\||x|^{\mu}(-\Delta)^{\nu/2}\varphi\|_{2}.
\end{align}

\begin{cor}\label{str}
Let $d\ge 1$,  $\alpha > 0$, $\alpha \neq 1$ and $2\le q,r<\infty$. If  $d/r +
2/q
> d/2$ and $d/r + 1/q < d/2$, then \eqref{w-str} holds
provided that $\mu=s_c$ and $\nu=\frac{2-\alpha}{q}$.
\end{cor}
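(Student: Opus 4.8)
The plan is to deduce Corollary \ref{str} from Theorem \ref{xed} by combining a Littlewood--Paley decomposition in $x$ with the anisotropic rescaling $(x,t)\mapsto(2^kx,2^{k\alpha}t)$ adapted to the propagator $e^{it(-\Delta)^{\alpha/2}}$. Write $\varphi=\sum_{k\in\Z}\varphi_k$ with $\widehat{\varphi_k}$ supported in $\{|\xi|\sim 2^k\}$, and fix once and for all a bump $a$ equal to $1$ on the rescaled Littlewood--Paley annulus; after a harmless normalization this annulus lies inside $B(0,1)$. On it $\phi(\xi):=|\xi|^\alpha$ is smooth and $\det H\phi=\alpha^d(\alpha-1)|\xi|^{d(\alpha-2)}$, so the hypotheses $\alpha>0$, $\alpha\neq 1$ give $\det H\phi\neq 0$ there and Theorem \ref{xed} applies with this $\phi$ and this $a$. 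This nondegeneracy is the only place $\alpha\neq1$ is used, and it is what forces the exclusion of the wave case.

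First I would carry out the rescaling on a single piece. Substituting $\xi=2^k\zeta$ in $e^{it(-\Delta)^{\alpha/2}}\varphi_k(x)=c\int e^{i(x\cdot\xi+t|\xi|^\alpha)}\widehat{\varphi_k}(\xi)\,d\xi$ gives $e^{it(-\Delta)^{\alpha/2}}\varphi_k(x)=c\,2^{kd}(E_\phi g_k)(2^kx,2^{k\alpha}t)$, where $g_k(\zeta)=\widehat{\varphi_k}(2^k\zeta)$ (and $a\equiv1$ on its support). Taking the $L^r_x$ norm and then the $L^q_t$ norm and keeping track of the Jacobians yields $\norm{e^{it(-\Delta)^{\alpha/2}}\varphi_k}_{L^q_tL^r_x}=c\,2^{k(d(1-1/r)-\alpha/q)}\norm{E_\phi g_k}_{L^q_tL^r_x}$. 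Since the hypotheses $d/r+2/q>d/2$ and $d/r+1/q<d/2$ force $0<s_c<1/q\le 1/2\le d/2$, Remark \ref{remark1} allows us to apply Theorem \ref{xed} in the homogeneous form $\norm{E_\phi g_k}_{L^q_tL^r_x}\lesssim\norm{g_k}_{\dot H^{s_c}}$; and by Plancherel together with the exact dilation behaviour of $\dot H^{s_c}$ one gets $\norm{g_k}_{\dot H^{s_c}}=c\,2^{k(s_c-d/2)}\norm{|x|^{s_c}\varphi_k}_2$. Multiplying the exponents, $d(1-1/r)-\alpha/q+s_c-d/2=(2-\alpha)/q=\nu$, so
\[\norm{e^{it(-\Delta)^{\alpha/2}}\varphi_k}_{L^q_tL^r_x}\lesssim 2^{k\nu}\norm{|x|^{s_c}\varphi_k}_2,\qquad \mu=s_c.\]

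It remains to reassemble. Because $e^{it(-\Delta)^{\alpha/2}}$ commutes with $P_k$, the pieces $e^{it(-\Delta)^{\alpha/2}}\varphi_k$ have dyadically separated $x$-frequency supports, so the Littlewood--Paley inequality in $x$ (for each fixed $t$) followed by the triangle inequality in $L^{q/2}_tL^{r/2}_x$ (which uses $q,r\ge 2$) gives $\norm{e^{it(-\Delta)^{\alpha/2}}\varphi}_{L^q_tL^r_x}\lesssim\norm{\brk{\sum_k|e^{it(-\Delta)^{\alpha/2}}\varphi_k|^2}^{1/2}}_{L^q_tL^r_x}\le\brk{\sum_k\norm{e^{it(-\Delta)^{\alpha/2}}\varphi_k}_{L^q_tL^r_x}^2}^{1/2}$. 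By the previous step this is $\lesssim\brk{\sum_k 2^{2k\nu}\norm{|x|^{s_c}\varphi_k}_2^2}^{1/2}=\norm{\brk{\sum_k 2^{2k\nu}|\varphi_k|^2}^{1/2}}_{L^2(w)}$ with $w=|x|^{2s_c}$. Now $0<2s_c<1\le d$, so $w\in A_2(\R^d)$; setting $\psi:=(-\Delta)^{\nu/2}\varphi$ one has $2^{k\nu}\varphi_k=M_0(2^{-k}D)\psi_k$ for a fixed $M_0\in C_0^\infty(\R^d\setminus\{0\})$, and since each $M_0(2^{-k}D)$ is a dilate of the fixed Calder\'on--Zygmund operator $M_0(D)$ the family is uniformly bounded on $L^2(w)$; together with the weighted Littlewood--Paley inequality $\norm{\brk{\sum_k|\psi_k|^2}^{1/2}}_{L^2(w)}\lesssim\norm{\psi}_{L^2(w)}$ this bounds the above by $\norm{\psi}_{L^2(w)}=\norm{|x|^{s_c}(-\Delta)^{\nu/2}\varphi}_2$, which is \eqref{w-str}.

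Essentially all of this is bookkeeping once Theorem \ref{xed} and Remark \ref{remark1} are available. The exponent count in the rescaling is the one place one must be careful with constants, but it collapses cleanly to $\nu=(2-\alpha)/q$; the only genuinely substantive inputs are the nondegeneracy $\det H\phi\neq0$ on annuli and the weighted Littlewood--Paley summation in the last step, which works precisely because the gap condition $d/r+1/q<d/2$ yields $s_c<1/2$, hence $|x|^{2s_c}\in A_2(\R^d)$.
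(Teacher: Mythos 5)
Your proof is correct and follows essentially the same strategy as the paper's: Littlewood--Paley decomposition, anisotropic rescaling $(x,t)\mapsto(2^kx,2^{k\alpha}t)$ to reduce each dyadic piece to Theorem \ref{xed} (via Remark \ref{remark1}) applied to $\phi(\xi)=|\xi|^\alpha$ on an annulus where $\det H\phi\neq 0$ thanks to $\alpha\neq 1$, the exponent computation yielding $\nu=(2-\alpha)/q$, and then reassembly via Minkowski plus weighted Littlewood--Paley in $L^2(|x|^{2s_c})$ using $|x|^{2s_c}\in A_2$ from $0<s_c<1/2$. The only quibble is cosmetic: the annulus $\{|\zeta|\sim 1\}$ does not literally sit inside $B(0,1)$, so one needs to dilate slightly or take the amplitude supported on a larger ball, which as you note is harmless.
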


We now consider the case $\alpha=1$, that is to say the wave
operator. Let us set $s_c^w:=s_c(q,r, d-1).$

\begin{cor}\label{str2} Let $d\ge 2$,
$\alpha=1$ and $2\le q,r<\infty$. If $(d-1)/r + 2/q > (d-1)/2$ and
$(d-1)/r + 1/q < (d-1)/2$, then \eqref{w-str} holds provided that
$\mu=s_c^w$ and $\nu=\frac{1}{2}+\frac{1}{q}-\frac{1}{r}$.
\end{cor}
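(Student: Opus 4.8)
Throughout we argue as in the proof of Corollary \ref{str}; the only new point is that for $\alpha=1$ the phase $\phi(\xi)=|\xi|$ degenerates in the radial direction, so Theorem \ref{xed} cannot be applied directly in $\mathbb R^d$. Instead we exploit the $d-1$ non-vanishing curvatures of the light cone to reduce matters to Theorem \ref{xed} in dimension $d-1$. First we reduce to a single dyadic frequency. Write $\varphi=\sum_{j\in\mathbb Z}\varphi_j$ with $\widehat{\varphi_j}$ supported in $|\xi|\sim 2^j$. Since $e^{it(-\Delta)^{1/2}}$, $(-\Delta)^{\nu/2}$ and $|x|^\mu$ are homogeneous and one checks $\mu-\nu=d/r+1/q-d/2$, the inequality \eqref{w-str} is invariant under the scaling $\xi=2^j\zeta$, so it suffices to prove, uniformly in $j$, the estimate $\|e^{it(-\Delta)^{1/2}}\varphi_j\|_{L_t^qL_x^r}\lesssim\||x|^\mu(-\Delta)^{\nu/2}\varphi_j\|_2$. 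The summation in $j$ is then routine: one uses the Littlewood--Paley inequality in $x$ (each $e^{it(-\Delta)^{1/2}}\varphi_j$ has $x$-frequency $\sim 2^j$), Minkowski's inequality in $L_t^{q/2}L_x^{r/2}$ (permissible since $q,r\ge 2$), and the $A_2$-weighted square function inequality on the right, all legitimate because the hypotheses force $0<\mu<1/2$, hence $|x|^{2\mu}\in A_2(\mathbb R^d)$.

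By scaling we may take $j=0$, so $\widehat{\varphi_0}$ is supported in $|\xi|\sim 1$; splitting this annulus into finitely many angular sectors (and using rotation invariance) we may assume $\widehat{\varphi_0}$ is supported in a sector $\{\xi_1\sim 1,\ |\xi'|\lesssim 1\}$ with $\xi=(\xi_1,\xi')\in\mathbb R\times\mathbb R^{d-1}$. There we set $\xi=(\xi_1,\xi_1\eta)$, so that $|\xi|=\xi_1\psi(\eta)$ with $\psi(\eta)=\sqrt{1+|\eta|^2}$, which satisfies $\det H\psi\neq 0$ on the relevant bounded range of $\eta$. A change of variables yields
\[
e^{it(-\Delta)^{1/2}}\varphi_0(x)=\int_{\xi_1\sim 1}e^{i\xi_1x_1}\,\mathcal E_\psi\!\big[F(\xi_1,\cdot)\big](\xi_1x',\,\xi_1t)\,d\xi_1,\qquad F(\xi_1,\eta):=\xi_1^{d-1}\widehat{\varphi_0}(\xi_1,\xi_1\eta),
\]
where $\mathcal E_\psi$ is the extension operator of Theorem \ref{xed} in dimension $d-1$ for the phase $\psi$ (with a fixed cutoff that is $1$ on the range of $\eta$).

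For fixed $(x',t)$, the function $x_1\mapsto e^{it(-\Delta)^{1/2}}\varphi_0(x)$ has $x_1$-Fourier transform supported in $\{\xi_1\sim 1\}$, so Bernstein's inequality and Plancherel give $\|e^{it(-\Delta)^{1/2}}\varphi_0\|_{L^r_{x_1}}\lesssim\|e^{it(-\Delta)^{1/2}}\varphi_0\|_{L^2_{x_1}}=\|\mathcal E_\psi[F(\xi_1,\cdot)](\xi_1x',\xi_1t)\|_{L^2_{\xi_1}}$. Taking $L_t^qL_{x'}^r$ norms, pulling $L^2_{\xi_1}$ outside by Minkowski's inequality ($q,r\ge 2$), rescaling $(x',t)\mapsto(\xi_1x',\xi_1t)$ (Jacobian $\sim 1$ since $\xi_1\sim 1$), and applying Theorem \ref{xed} in $\mathbb R^{d-1}$ — whose hypotheses read precisely $(d-1)/r+2/q>(d-1)/2$ and $(d-1)/r+1/q<(d-1)/2$, and for which we may use $\dot H^{s_c^w}$ by Remark \ref{remark1} since $0<s_c^w<(d-1)/2$ — we obtain
\[
\big\|e^{it(-\Delta)^{1/2}}\varphi_0\big\|_{L_t^qL_x^r}\lesssim\Big(\int_{\xi_1\sim 1}\big\|F(\xi_1,\cdot)\big\|_{\dot H^{s_c^w}_\eta}^{2}\,d\xi_1\Big)^{1/2}.
\]
Since $s_c^w=\mu$, undoing the substitution $\xi=(\xi_1,\xi_1\eta)$ (harmless because $\xi_1\sim 1$) and using $|\rho'|\le|\rho|$ for $\rho=(\rho_1,\rho')$ bounds the right-hand side by $\|(-\Delta_{\xi'})^{\mu/2}\widehat{\varphi_0}\|_{L^2_\xi}\le\|(-\Delta_\xi)^{\mu/2}\widehat{\varphi_0}\|_{L^2_\xi}\approx\||x|^\mu\varphi_0\|_2\approx\||x|^\mu(-\Delta)^{\nu/2}\varphi_0\|_2$, where in the last step $|\xi|^{-\nu}$ times a fixed cutoff to $|\xi|\sim 1$ is a Calder\'on--Zygmund multiplier, hence bounded on $L^2(|x|^{2\mu}\,dx)$. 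This gives the frequency-localized estimate, and the corollary follows.

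The crux is the radial degeneracy of $\phi(\xi)=|\xi|$: one must organize the reduction to dimension $d-1$ so as to sacrifice \emph{only} radial regularity. The Bernstein/Plancherel step in $x_1$ is exactly what allows the radial variable $\xi_1$ to enter through a bare $L^2$ norm (with no derivative), which is why the exponent drops from $s_c$ to $s_c^w$ and why $\nu=\tfrac12+\tfrac1q-\tfrac1r$ here rather than $(2-\alpha)/q$ as in Corollary \ref{str}; the delicate part is checking that nothing further is lost and that all the scaling exponents match.
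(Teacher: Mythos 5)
Your proof is correct and runs parallel to the paper's own: both reduce to a single dyadic shell and an angular sector, parametrize the cone by a radial variable and $d-1$ transverse ones, apply Theorem \ref{xed} to the nondegenerate $(d-1)$-dimensional slices, integrate out the radial variable, and re-sum the dyadic pieces via an $A_2$-weighted square-function argument as in the proof of Corollary \ref{str}. The one substantive technical difference is the treatment of the radial variable. The paper first replaces $|\xi|$ by $|\xi|-\xi_d$ (so the residual phase $\theta(\eta)=\sqrt{1+|\eta|^2}-1$ vanishes at $\eta=0$), shifts $x_d\to x_d-t$, applies Hausdorff--Young in $x_d$ to land in $L^{r'}_{\xi_d}$, and only at the very end uses H\"older on the bounded interval $(1/2,2)$ to return to $L^2_{\xi_d}$. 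You instead use Bernstein's inequality ($L^r_{x_1}\lesssim L^2_{x_1}$, valid because the radial Fourier support is a fixed bounded interval away from the origin) followed by Plancherel, arriving in $L^2_{\xi_1}$ in a single step and avoiding the phase shift altogether. Both routes hinge on $q,r\ge 2$ for the Minkowski exchange of the radial norm with $L^q_tL^r_{x'}$, and both are equally rigorous; yours is marginally cleaner because the radial variable never leaves $L^2$, which makes the passage $\|(-\Delta_\xi)^{\mu/2}\widehat\varphi_0\|_2\approx\||x|^{\mu}\varphi_0\|_2\approx\||x|^{\mu}(-\Delta)^{\nu/2}\varphi_0\|_2$ transparent (the paper's intermediate bound carries the slightly stronger transverse weight $|\bar x|^{s_c^w}$, but that refinement is not exploited). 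Your verifications of the scaling identity $\mu-\nu=d/r+1/q-d/2$, of $0<s_c^w<1/2$ (so that Remark \ref{remark1} applies with $s_c^w<(d-1)/2$ and $|x|^{2s_c^w}\in A_2$), and of $\det H\psi\neq 0$ for $\psi(\eta)=\sqrt{1+|\eta|^2}$ are all correct.
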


Let $\Delta_{\omega}$ be the Laplace-Beltrami operator on the unit
sphere $\mathbb S^{d-1}\subset \mathbb R^{d}$ which is given by $
\Delta_{\omega}=\sum_{1\le i<j\le d} \Omega_{i,j}^2,$ $\Omega_{i,j}=
\omega_i\partial_j -\omega_j
\partial_i.$ Then define a Sobolev norm
$\|\cdot\|_{H^{\nu}_{sph}}$ by setting
\[\|f\|_{H^{\nu}_{sph}}^2= \int_0^\infty \int_{S^{d-1}}|(1-\Delta_{\omega})
^{\nu/2}  f(r\omega)|^2\,d\sigma_\omega r^{d-1}\,d r.\] Let us
consider the estimate
\begin{equation}\label{angular}
\|(-\Delta)^{{\gamma_1}/2}
e^{it\sqrt{-\Delta}}\varphi\|_{L_t^q(\mathbb R,\, L_x^r(\mathbb
R^d))} \le C\|\varphi\|_{H^{\nu}_{sph}}
\end{equation} assuming the natural scaling invariant condition
\begin{equation}\label{eq:scaling}
\gamma_1=\frac{1}{q}+\frac{d}{r}-\frac{d}{2}.
\end{equation}
This type of inequality was studied by  Sterbenz \cite{ster} to
extend the range of admissible $q,r$ by making use of angular
regularity (see \cite{jwy,cl, glnw} for related results and
references therein). It is known (\cite{ster}) that the estimate
\eqref{angular} holds only if
\[ \nu\ge s_c^w, \quad \frac{d-1}{r}+\frac 1q< \frac{d-1}2\,.\]
Sterbenz \cite{ster} showed \eqref{angular} for $\nu>s_c^w$. Our
result enables us to obtain the  estimate of the endpoint regularity
when $q\ge r$.

\begin{cor}\label{str3} Let $d\ge 2$ and $2\le q,r<\infty$.
If $q\ge r$, $(d-1)/r + 2/q > (d-1)/2$ and $(d-1)/r + 1/q <
(d-1)/2$, then \eqref{angular} holds provided that $\nu\ge s_c^w$
and \eqref{eq:scaling}.
\end{cor}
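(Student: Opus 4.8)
The plan is to deduce Corollary~\ref{str3} from Theorem~\ref{xed} applied in dimension $d-1$, by peeling off the two features that distinguish the light cone from a curved hypersurface in $\mathbb R^{d-1}$: the null (radial) direction, and the fact that only angular, not full, regularity is at our disposal. First I would reduce to data with $|\xi|\sim1$. The exponent $\gamma_1=\tfrac1q+\tfrac dr-\tfrac d2$ of \eqref{eq:scaling} is precisely the one making \eqref{angular} invariant under $\varphi\mapsto\varphi(\lambda\cdot)$; since the rotation fields $\Omega_{i,j}$ commute with radial frequency projections and with dilations, a Littlewood--Paley decomposition in $|\xi|$ together with the square function inequality on $L^q_tL^r_x$ (legitimate for $2\le q,r<\infty$, the pieces being frequency-disjoint in $x$) reduces everything to a single dyadic shell. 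There $(-\Delta)^{\gamma_1/2}$ is a harmless multiplier, and it suffices to prove the estimate with $\nu=s_c^w$, the case $\nu>s_c^w$ following from $\|\varphi\|_{H^{s_c^w}_{sph}}\le\|\varphi\|_{H^\nu_{sph}}$.

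On the shell $|\xi|\sim1$, insert a finite partition of unity into angular caps. On a cap around $e_1$, writing $\xi=(\xi_1,\xi')$ and parametrizing the cone by $(\rho,\xi')$ with $\rho=|\xi|$, $\xi_1=\sqrt{\rho^2-|\xi'|^2}$, one exhibits $e^{it\sqrt{-\Delta}}\varphi$ restricted to the cap as
\[\int_{\rho\sim1}e^{it\rho}\,E_{\phi_\rho}h_\rho(x',x_1)\,d\rho,\qquad \phi_\rho(\xi')=\sqrt{\rho^2-|\xi'|^2},\]
where $E_{\phi_\rho}$ is exactly the type of $(d-1)$-dimensional extension operator treated in Theorem~\ref{xed} (smooth phase, nonsingular Hessian, uniformly in $\rho\sim1$), with $x'$ the ``space'' and $x_1$ the ``time'', and $h_\rho$ is (a smooth cutoff times) the trace of $\widehat\varphi$ on the sphere $|\xi|=\rho$ in the $\xi'$-coordinate. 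The radial variable is now the flat direction, entering only through the scalar factor $e^{it\rho}$, and it has been traded for a dependence on $t$. This is where the hypothesis $q\ge r$ is used: by Minkowski's inequality $\|F\|_{L^q_t(L^r_x)}\le\|F\|_{L^r_x(L^q_t)}$, so the $t$-norm may be taken first, for fixed $x$; for fixed $(x',x_1)$ the function $t\mapsto\int_{\rho\sim1}e^{it\rho}E_{\phi_\rho}h_\rho\,d\rho$ has Fourier support of measure $O(1)$, so Bernstein ($q\ge2$) and Plancherel trade $L^q_t$ and the $\rho$-integral for an $L^2$ integral in $\rho$, and a second Minkowski ($r\ge2$) pulls the $\ell^2_\rho$ outside $L^r_{x'}L^q_{x_1}$. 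One is left with $\big(\int_{\rho\sim1}\|E_{\phi_\rho}h_\rho\|_{L^q_{x_1}L^r_{x'}}^2\,d\rho\big)^{1/2}$, and Theorem~\ref{xed} in dimension $d-1$ — whose hypotheses $2\le q,r<\infty$, $(d-1)/r+2/q>(d-1)/2$, $(d-1)/r+1/q<(d-1)/2$ are exactly those of Corollary~\ref{str3} — bounds this by $\big(\int_{\rho\sim1}\|h_\rho\|_{H^{s_c^w}_{\xi'}}^2\,d\rho\big)^{1/2}$, with $s_c^w=s_c(r,q,d-1)$ the critical index.

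It remains to recognize the last quantity as $\|\varphi\|_{H^{s_c^w}_{sph}}$. Since $h_\rho$ is, up to the cutoff, the trace of $\widehat\varphi$ on $|\xi|=\rho$ in the $\xi'$-parametrization, its $\xi'$-derivatives are tangential derivatives of $\widehat\varphi$ along that sphere; hence $\|h_\rho\|_{H^{s_c^w}_{\xi'}}$ is comparable, uniformly for $\rho\sim1$, to the angular $H^{s_c^w}$ norm of $\widehat\varphi$ on $|\xi|=\rho$. Integrating in $\rho$ and summing the finitely many caps gives $\|\varphi\|_{H^{s_c^w}_{sph}}$, because on the Fourier side the $\Omega_{i,j}$ act precisely as these angular derivatives. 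Note that since $H^{s_c^w}_{sph}$ is $L^2$ in the radial variable, this scheme meshes with the $L^2$ structure in $\rho$ produced above and no delicate summation is needed; in particular the endpoint $\nu=s_c^w$ comes for free.

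The hard part is the bookkeeping in the second step: one must identify the flat direction cleanly, control the $\rho$-dependence of the phases $\phi_\rho$ so that Theorem~\ref{xed} applies with a constant uniform in $\rho$ and the factor $e^{it\rho}$ really behaves like a one-dimensional Fourier integral over an interval of length $O(1)$, and check that the exponents produced by the rescalings and the Minkowski/Bernstein steps combine to exactly $s_c^w$ rather than something larger. It is the interaction of the mixed norm $L^q_tL^r_x$ with the frozen radial variable, resolved by $q\ge r$, that keeps this index sharp; for $q<r$ the Minkowski step above is not available. Verifying the equivalence of the sliced angular Sobolev norms with $\|\cdot\|_{H^\nu_{sph}}$ — including near the poles of the coordinate patches, and for fractional $s_c^w$ — is standard but needs some care.
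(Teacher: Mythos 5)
Your overall plan — Littlewood--Paley reduction to a dyadic shell, polar-coordinate slicing to exhibit $e^{it\sqrt{-\Delta}}\varphi$ as a one-parameter family of $(d-1)$-dimensional extension operators indexed by $\rho=|\xi|$, with $t$ entering only through the scalar factor $e^{it\rho}$, and finally identifying the sliced Sobolev norms with $\|\varphi\|_{H^\nu_{sph}}$ — is essentially the paper's argument for the \emph{unmixed} case $q=r$, merely phrased through coordinate patches and Theorem~\ref{xed} instead of the global formulation via Theorem~\ref{thm-sphere} (the paper integrates over $\mathbb S^{d-1}$ directly and applies Theorem~\ref{thm-sphere}, uses Hausdorff--Young in $t$ to pass to $L^{q'}_\rho$, then H\"older in $\rho$; your Bernstein+Plancherel+H\"older chain is equivalent for $q=r$).

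The gap is in how you handle $q>r$. After the first Minkowski $\|F\|_{L^q_tL^r_x}\le\|F\|_{L^r_xL^q_t}$ and the Bernstein/Plancherel step for fixed $x$, you land on $\|\,\|E_{\phi_\rho}h_\rho(x)\|_{L^2_\rho}\|_{L^r_x}$, and the second Minkowski then gives $\big(\int_{\rho\sim1}\|E_{\phi_\rho}h_\rho\|_{L^r_{x_1}L^r_{x'}}^2\,d\rho\big)^{1/2}$ — \emph{both} spatial exponents are $r$, not the $L^q_{x_1}L^r_{x'}$ you wrote. Nothing in the Minkowski/Bernstein steps promotes the $x_1$-exponent from $r$ back up to $q$: after the swap, the variable $t$ carrying the exponent $q$ has been consumed by Plancherel, and $x_1$ only ever saw the exponent $r$. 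Feeding $L^r_{x_1}L^r_{x'}$ into Theorem~\ref{xed} (applied in dimension $d-1$ with time exponent $r$ and space exponent $r$) costs regularity $s_c(r,r,d-1)=s_c^w+2(1/r-1/q)$, which is strictly larger than $s_c^w$ whenever $q>r$; so the argument as written misses the endpoint regularity precisely in the regime it is supposed to cover.

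The paper closes this by \emph{first} reducing to $q=r$: it interpolates the diagonal estimate $(q_0,q_0)$ against the trivial energy estimate $\|(-\Delta)^{\gamma_1(\infty,2,d)/2}e^{it\sqrt{-\Delta}}\varphi\|_{L^\infty_tL^2_x}\lesssim\|\varphi\|_{H^0_{sph}}$; the segment joining $(q_0,q_0)$ to $(\infty,2)$ sweeps out exactly the admissible $(q,r)$ with $q\ge r$, and one checks that $\gamma_1$ and $\nu=s_c^w$ both interpolate linearly and correctly along it. This is where the hypothesis $q\ge r$ is actually used — not, as you suggest, in the Minkowski step. If you insert this interpolation reduction at the outset, the remainder of your $\rho$-slicing argument (now run with $q=r$) is sound and coincides with the paper's.
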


In addition to \,$\widehat{}$\, and $ {}^\vee$,  we use $\mathcal F,
\mathcal F^{-1}$ to denote Fourier, inverse Fourier transforms,
respectively. The paper is organized as follows. In section 2 we
provide a few preliminaries for the proof of Theorem \ref{xed}. In
section 3 we prove Theorem \ref{xed} and the proofs of Corollary
\ref{str}, \ref{str2} and \ref{str3} are given in section 4.

\section{Preliminaries}

For the proof of the estimate \eqref{mixed}  we may assume that
$\phi$  is close to a quadratic form. More precisely, let $\phi$ be
a smooth function satisfying that $\det H\phi$ is nonsingular. Then
we may assume that
\begin{equation}\label{normal} \phi(\xi)=\frac12
\xi^t D\xi+  \mathcal E(\xi), \,\, \| \mathcal E\|_{C^L (B(0,2))}\le
C \epsilon_0
\end{equation} with  a sufficiently small $\epsilon_0>0$ and  a
sufficiently large positive integer $L$ where $D$ is the diagonal
matrix with nonzero entries $\pm 1$.

\subsection*{\it Parabolic rescaling}

 Indeed, let $\xi_0$ be
a point in $B(0,1)$. By decomposing $a$ into finite number of smooth
functions which are supported in small balls we need only to
consider the localized operator
\[\int e^{i(x\cdot \xi+t\phi(\xi))}a_{\xi_0,\epsilon_0}(\xi) f(\xi) d\xi,\]
where $a_{\xi_0,\epsilon_0}$ is a smooth function supported in $
B(\xi_0,\epsilon_0)$.
 By Taylor expansion we have
\[ \phi(\xi)=\phi(\xi_0)+\nabla \phi(\xi_0)\cdot(\xi-\xi_0)+\frac12  (\xi-\xi_0)^t H \phi(\xi_0)(\xi-\xi_0)+O(|\xi-\xi_0|^3).\]
By discarding harmless factors, translation $\xi\to \xi+\xi_0$ and
the linear change of variables $ (x,t)\to (x+t\nabla \phi(\xi_0),t)$
we may assume
\[ \phi(\xi)=\frac12  \xi^t H \phi(\xi_0)\xi+O(|\xi|^3),\]
and then making a linear change of variables for both $x$ and $\xi$
we may further simplify $\frac12  \xi^t H \phi(\xi_0)\xi$ to the
form $\frac12(\xi_1^2\pm \xi_2^2\pm \dots \pm \xi_d^2)=\frac12 \xi^t
D\xi$ (diagonalization and rescaling). These operations do not
affect the estimate \eqref{mixed}  except  changing the constant
$C$. Now one can make the effect of error term small by further
scaling
\[\xi \to
\epsilon_0 \xi,\quad (x,t)\to (\epsilon_0^{-1} x, \epsilon_0^{-2} t)
\] which changes $x\cdot \xi+t(\frac12
\xi^t D\xi+O(|\xi|^3))$ to  $x\cdot \xi+t(\frac12 \xi^t
D\xi+O(\epsilon_0|\xi|^3))$. Hence we get \eqref{normal}.

\subsection*{\it Asymptotic of oscillatory integral} From the assumption
\eqref{normal} $\nabla\phi(\xi)$ is close to $D\xi$. Hence, with a
sufficiently small  $\epsilon_0$  we may assume that $\xi\to
\nabla\phi(\xi)$ is a diffeomorphism on $B(0,2)$ such that there is
a unique smooth function $\eta:B(0,15/8)\to \mathbb R^d$ such that
\[\nabla \phi(\eta( x))=-x.\]
Then we define
\begin{equation}
\label{psi} \psi( x)= x\cdot\eta(x)+\phi(\eta(x)).
\end{equation}
Since $|\nabla_\xi(\xi\cdot x+t\phi(\xi))|\gtrsim |x|$ if $|x|\ge
5t/4$ (here we are assuming $0<\epsilon_0\ll 1$), by routine
integration by parts we see that for any $M>0$
\begin{equation}\label{easy}\Big| \int e^{i( x\cdot \xi+ t\phi(\xi))} a(\xi)
d\xi\Big|\lesssim (1+|x|)^{-M}(1+|t|)^{-M}.\end{equation}  For the
other case $|x|< 5t/4$ we need the following which can be shown by
the stationary phase method. It is a special case of  Theorem 7.7.6
in H\"ormander \cite{Hor} (see p.222).

\begin{lem}\label{asymp} Suppose that $\phi$ is given by \eqref{normal} and $\supp a\subset B(0,1)$.
Then, if $t\ge 1$ and $|x|\lesssim t$, for a positive integer $N<
L/2-1$
\[\int e^{i( x\cdot \xi+ t\phi(\xi))} a(\xi) d\xi= \sum_{l=0}^N
t^{-\frac d2-l} e^{it\psi(\frac x t)}a_l (\frac x t)
+O(|t|^{-N-\frac d2-1}),\] where $a_l$ is a  bounded smooth function
with compact support.
\end{lem}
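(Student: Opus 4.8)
The plan is to reduce Lemma~\ref{asymp} to a direct application of the stationary phase expansion (H\"ormander's Theorem 7.7.6), so the main work is verifying the hypotheses and tracking how the non-degenerate critical point gives rise to the stated phase $\psi$ and amplitudes $a_l$. First I would fix $t\ge 1$ and write the integral in the rescaled form $\int e^{it(\frac{x}{t}\cdot\xi+\phi(\xi))}a(\xi)\,d\xi$, so that the large parameter is $t$ and the phase is $\Phi(\xi)=\frac{x}{t}\cdot\xi+\phi(\xi)$. The critical points solve $\nabla\phi(\xi)=-\frac{x}{t}$; since $|x|\lesssim t$ we may (after possibly shrinking the region where $|x|/t$ is allowed, which only costs harmless constants absorbed into the $O$-term via \eqref{easy}) assume $|x/t|\le 15/8$, so by the diffeomorphism property of $\nabla\phi$ established from \eqref{normal} there is exactly one critical point $\xi=\eta(x/t)$ in $B(0,2)$, and it lies in the interior where $a$ is smooth.

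Next I would check non-degeneracy: the Hessian of $\Phi$ at the critical point is $H\phi(\eta(x/t))$, which by \eqref{normal} is $D+H\mathcal E(\eta(x/t))$, a small perturbation of the diagonal matrix $D$ with entries $\pm1$; for $\epsilon_0$ small this is uniformly invertible with $|\det|$ bounded above and below, and its signature equals $\operatorname{sgn}D$, constant in the region. This is precisely the setting of H\"ormander's stationary phase lemma with the parameter $t$: it yields an asymptotic expansion $\int e^{it\Phi}a\,d\xi = e^{it\Phi(\eta(x/t))}\sum_{l=0}^{N} t^{-d/2-l}\,b_l + R_N$ with $|R_N|\lesssim t^{-d/2-N-1}$, where each $b_l$ is obtained by applying a differential operator of order $2l$ (built from the inverse Hessian) to the amplitude $a$ evaluated at the critical point, hence $b_l=a_l(x/t)$ is a smooth function of $x/t$, compactly supported because $\eta(x/t)\in\supp a$ forces $|x/t|$ to lie in a bounded set. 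The value $\Phi(\eta(x/t))=\frac{x}{t}\cdot\eta(x/t)+\phi(\eta(x/t))=\psi(x/t)$ by the definition \eqref{psi}, which produces the stated oscillatory factor $e^{it\psi(x/t)}$. The requirement $N<L/2-1$ comes from H\"ormander's theorem needing the amplitude and phase to have enough derivatives: the $l$-th term uses up to $2l+2$ derivatives of $\mathcal E$, and $\|\mathcal E\|_{C^L}<\infty$ is what \eqref{normal} guarantees.

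The one point needing a little care — and the main (mild) obstacle — is the uniformity of all the implicit constants in $x$: H\"ormander's theorem is stated at a fixed phase, so I would invoke the version with parameters (or simply note that $\eta$, $\det H\phi\circ\eta$, the signature, and all the derivatives of $\Phi$ up to order $L$ are uniformly controlled on the compact parameter region $|x/t|\le 15/8$, using \eqref{normal}), which makes the $O(|t|^{-N-d/2-1})$ bound uniform in $x$. For $|x|\ge 5t/4$ the critical point leaves the support, but then the non-stationary estimate \eqref{easy} already gives rapid decay in $t$, far better than the claimed remainder, so those $x$ are trivially included. Assembling these pieces gives the lemma.
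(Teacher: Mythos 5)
Your argument is correct and takes the same route as the paper, whose entire proof of this lemma is the citation ``a special case of Theorem 7.7.6 in H\"ormander.'' Your write-up supplies exactly the verification that citation requires: the unique non-degenerate critical point $\eta(x/t)$ coming from the diffeomorphism property built into \eqref{normal}, the identification of the critical value $\Phi(\eta(x/t))$ with $\psi(x/t)$ via \eqref{psi}, the uniform control of all derivatives over the compact parameter set $|x/t|\le 15/8$, the trivial regime $|x|\ge 5t/4$ handled by the non-stationary bound \eqref{easy}, and the bookkeeping that matches $N<L/2-1$ with the $C^L$ regularity of $\mathcal E$.
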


\section{Proof of Theorem \ref{xed}}
We first prove the estimate \eqref{mixed} and show the weak type
endpoint estimate at the end of this section.

\subsubsection*{Proof of \eqref{mixed}} To begin with we assume that the operator $E$ is
defined by $\phi$ which satisfies \eqref{normal} with a small
$\epsilon_0>0$ and a large $L$.  By time reversal symmetry it is
sufficient to show
\[ \norm{ E ( f)}_{{L_t^q((0,\infty),\, L_x^r(\mathbb R^d))}}\les
\norm{f}_{H^{s_c}}.\]
From the Strichartz estimate and Plancherel's
theorem we recall the estimate
\[ \norm{ E ( f)}_{{L_t^q((0,T),\, L_x^r(\mathbb R^d))}}\les
\norm{f}_2\] which holds for $T>0$ and $q,r$ satisfying $d/r+2/q=
d/2$, $(r,q,d)\neq (\infty, 2,2)$. By Plancherel's theorem and
H\"older's inequality we also have $\norm{ E f}_{{L_t^q((0,T),\,
L_x^2(\mathbb R^d))}}\les T^\frac1q \norm{f}_2$ for $q\ge 1$.
Interpolation between these two estimates gives
\begin{equation}\label{interpol}\norm{ E ( f)}_{{{L_t^q((0,T),\, L_x^r(\mathbb R^d))}}}\les
T^{\frac12(\frac dr +\frac 2q-\frac{d}2)} \norm{f}_2\end{equation}
for $q,r\ge 2$ satisfying $d/r+2/q\ge d/2$. Note that $s_c> 0$ if
$d/r+2/q > d/2$ and $d/r+1/q< d/2$.  Hence, we obviously need only
to show that
\[ \norm{ E ( f)}_{{L_t^q((1,\infty),\, L_x^r(\mathbb R^d))}}\les
\norm{f}_{H^{s_c}}.\]
From now on we assume that $t\ge 1$.

\medskip

Let $\beta\in C_c^\infty (1/2,2)$ such that $\sum_{-\infty}^\infty
\beta( 2^{-k} \rho)=1$ for $\rho>0$ and let us denote by $P_k$ the
Littlewood-Paley projection operator which is given by $\mathcal F(
P_k f)=\beta( 2^{-k} |\cdot|) \widehat f$. And we also set $\beta_0
= 1- \sum_{1}^\infty  \beta( 2^{-k} \rho)$ and define $P_{\le 0}$ by
$\mathcal F({P_{\le 0}f}) = \beta_0(|\cdot|) \widehat f$.

Using the projection operators, we decompose $Ef$  so that
\begin{equation}\label{decomp1}
 E f= E P_{\le 0} f + \sum_{k=1}^\infty E P_k f.
 \end{equation} It is easy
to handle $ E P_{\le 0} f$. Let us set
\[K(x,t)=\int e^{i(x\cdot \xi+t\phi(\xi))} a(\xi) d\xi.\]
By Fourier inversion we write
\[  E P_{\le 0} f(x,t)=\int K(x-y,t) \beta_0(|y|) \,f^\vee (y) dy.\]
Then  by \eqref{easy} and Lemma  \ref{asymp}  $|K(x,t)|\le
t^{-\frac{d}{2}}\chi_{B(0,\frac 54)}(\frac {x}t)+
(1+|x|)^{-M}t^{-M}$ for $N>0$. Hence it follows that
\[| E P_{\le 0} f(x,t)|\le C\int \Big( t^{-\frac{d}{2}}\chi_{B(0,\frac54)}(\frac {x-y}t)+
(1+|x-y|)^{-M}t^{-M}\Big) |\beta_0(|y|)  f^\vee (y)| dy.\] Since
$\beta_0(|\cdot|)$ is supported in $B(0,2)$, by Cauchy-Schwarz inequality and
Plancherel's theorem we see that
\begin{equation}\label{pw}\begin{aligned} | E P_{\le0} f(x,t)|&\le
Ct^{-\frac d2} (\chi_{\{|x|\le \frac54 t+2\}}+ (1+|x|)^{-M})
\|\beta_0(|\cdot|) f^\vee\|_1 \\ &\le Ct^{-\frac d2} (\chi_{\{|x|\le
\frac54 t+2\}}+ (1+|x|)^{-M})\|f\|_2.
\end{aligned}\end{equation}
So, by taking integration it follows that \[ \|E P_{\le0}
f\|_{{L_t^q((1,\infty),\, L_x^r(\mathbb R^d))}}\le  C\|t^{-\frac
d2+\frac d r}\|_{{L_t^q(1,\infty)}} \| f\|_2\le C\|f\|_2\] because
$d/r+1/q<d/2$.

\

For $k\ge 1$ and a large constant $C>0$, we set
\[\chi_k^\circ(t)=\chi_{[1, C2^{k}]}(t),
\quad \chi_{k}^c(t)=\chi_{[C2^{k},\infty)}(t).
\]
 We break the sum in \eqref{decomp1} so that
\[\sum_{k=1}^\infty  E P_k f =
\sum_{k=1}^\infty \chi_{k}^\circ  E P_k f + \sum_{k=1}^\infty
\chi_{k}^c  E P_k f.\] The contribution from the first summation is
rather easy to handle. In fact, since
$\frac{d}{r}+\frac2q-\frac{d}2>0$, using \eqref{interpol}
\begin{align*}
&\Big\|\sum_{k=1}^\infty \chi_{k}^\circ  E P_k f \Big\|_{L^q_t
L_x^r}\le \sum_{k=1}^\infty \|\chi_{k}^\circ  E P_k f \|_{L^q_t
L_x^r}\le C\sum_{k=1}^\infty 2^{\frac
k2(\frac{d}{r}+\frac2q-\frac{d}2)} \|P_k f \|_{L^2}
\\ &\le C\Big(\sum_{k=1}^\infty 2^{-k(\frac{d}{r}+\frac2q-\frac{d}2)}
\Big)^\frac12\Big( \sum_{k=1}^\infty
2^{2k(\frac{d}{r}+\frac2q-\frac{d}2)} \|P_k f
\|_{L^2}^2\Big)^\frac12 \le C\|f\|_{H^{s_c}}.
\end{align*}
Hence we are reduced to showing
\begin{equation}\label{reduced}
\Big\|\sum_{k=1}^\infty \chi_{k}^c  E P_k f \Big\|_{L^q_t L_x^r} \le
C\|f\|_{H^{s_c}}.
\end{equation}

We now use the asymptotic expansion in Lemma \ref{asymp}.  Let $A$
be a smooth function supported in $[-3/2,3/2]$ such that
 $A=1$ on  $[-5/4,5/4]$. Then we use   Lemma
\ref{asymp} for  $K(x,t)A(\frac{|x|}t)$ and \eqref{easy} for  $K(x,t)(1-A(\frac{|x|}t))$ to get
\[K(x,t)=\sum_{l=0}^N
t^{-\frac d2-l} e^{it\psi(\frac x t)}A_l(\frac x t)+ e (x,t),\]
where $e(x,t)=O((1+|x|+t)^{-N-\frac{d}{2}-1})$ and $A_l$ is a
smooth function supported in $B(0,3/2)$.  For simplicity we set
\[\widetilde A(\frac x t)= \sum_{l=0}^N
t^{-l} A_l(\frac x t).\]
We now define
\[   \widetilde E f(x,t)=t^{-\frac d2}\int   e^{it\psi(\frac {x-y} t)}\widetilde A(\frac {x-y} t)  \,f^\vee (y) dy, \quad  \mathcal R f(x,t)=\int   e(x-y,t)  \,f^\vee (y) dy.\]
Since $Ef=\int K(x-y,t)f^\vee (y) dy$, clearly $Ef= \widetilde
Ef+\mathcal Rf$. The left hand side of \eqref{reduced} is bounded by
\[\Big\|\sum_{k=1}^\infty \chi_{k}^c  \widetilde E P_k f \Big\|_{L^q_t L_x^r}+
\Big\|\sum_{k=1}^\infty \chi_{k}^c \mathcal R P_k f \Big\|_{L^q_t
L_x^r}.\] The contribution from $\sum_{k=1}^\infty \chi_{k}^c
\mathcal R P_k f$ is easy to control. In fact, since $q,r\ge 2$,
with a sufficiently large $N$ (using $e(x,t)=O((1+|x|+t)^{-N-1})$)
and by Young's inequality we see that for $s>0$
\begin{align*}
& \Big\|\sum_{k=1}^\infty \chi_{k}^c  \mathcal R P_k f \Big\|_{L^q_t
L_x^r} \le\sum_{k=1}^\infty \Big\| \chi_{k}^c \mathcal R P_k f
\Big\|_{L^q_t L_x^r}
\\
\le C &\sum_{k=1}^\infty  \Big\| t^{-1} \Big\|  \int (1+|\cdot
-y|)^{-N} |\beta(2^{-k}|y|) f^\vee (y)| dy \Big\|_{L^r_x}
\Big\|_{L_t^q(1,\infty)}
\\
\le  C& \sum_{k=1}^\infty \|\beta(2^{-k}|\cdot|)f^\vee\|_2\le C
\|f\|_{H^s}.
\end{align*}
For the last inequality we use the Cauchy-Schwarz inequality and
Plancherel's theorem. To get the desired bound, by multiplying
harmless factor $e^{-it\psi(x/t)}$  it is sufficient to consider the
operator $\widetilde { E_\psi}$ which is defined by
\begin{equation*}
\widetilde { E_\psi}f(x,t) = e^{-it\psi(\frac x t)} \widetilde {
E}f(x,t).
\end{equation*}
The estimate \eqref{reduced} follows from
\begin{equation}\label{aaa}
\Big\|\sum_{k=1}^\infty  \chi_k^c(t) \widetilde { E_\psi}
P_kf\Big\|_{L^q_{t}L_x^r} \le C \|f\|_{H^{s_c}}.
\end{equation}

Let us set \begin{equation} \label{mint} m(k,y, \xi)=2^{kd}\int
e^{i(t\psi(x-\frac{2^ky}{t})-t\psi(x)-x\cdot \xi)} \widetilde
A(x-\frac{2^ky}{t}) dx.\end{equation}
 Then by scaling $y\to 2^k y$ we get
\begin{equation}\label{ftrans} \mathcal F_x (\widetilde { E_\psi} P_kf( t \cdot , t)
)(\xi)= t^{-\frac d2} \int m(k,y, \xi)  \beta(|y|) f^\vee(2^k y)
dy.\end{equation}
Here $\mathcal F_x$ denotes the Fourier transform in $x$. In order
to get \eqref{aaa} we need the following lemma which shows that if
$t\gg 2^{k}$, the Fourier transform of $\widetilde { E_\psi}
P_kf(t\cdot,t)$ is essentially supported in the set $\{\xi:|\xi|\sim
2^k\}$.

\begin{lem} Let $1/2\le |y| \le 2$. If  $ |\xi|\ge B 2^k$ or $ |\xi|\le B^{-1} 2^k$ for some large
$B>0$, then  for any $M>0$ and multi-index  $\alpha$
 \begin{equation} \label{est}
 |\partial_\xi^\alpha m(k,y, \xi)|
\le C(\max\{2^k, |\xi|\})^{-M}\end{equation}
 with $C$ independent of $k,$ $y$.
\end{lem}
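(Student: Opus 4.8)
The plan is to prove the bound \eqref{est} by a non-stationary phase (repeated integration by parts) argument applied to the oscillatory integral \eqref{mint}. The key is to show that on the region $1/2\le|y|\le2$ and $t\ge C2^k$, the $x$-gradient of the phase
\[
\Phi(x)=t\psi\Big(x-\tfrac{2^ky}{t}\Big)-t\psi(x)-x\cdot\xi
\]
is bounded below by a constant multiple of $\max\{2^k,|\xi|\}$ whenever $|\xi|\ge B2^k$ or $|\xi|\le B^{-1}2^k$. First I would write, by the mean value theorem (since $\psi$ is smooth on the relevant region once $\epsilon_0$ is small),
\[
t\Big(\psi\big(x-\tfrac{2^ky}{t}\big)-\psi(x)\Big)=-2^k\int_0^1 \nabla\psi\big(x-s\tfrac{2^ky}{t}\big)\,ds\cdot y =:-2^k\, G(x,y,t)\cdot y,
\]
so that $\nabla_x\Phi(x)=-2^k\,(D_xG)\,y-\xi$, where $D_xG$ is a matrix whose entries are $O(1)$ plus, crucially, whose size is controlled using $|\nabla\psi|\lesssim 1$ and the fact that the averaging is over a segment of length $2^k/t\le C^{-1}\ll1$, so that $D_xG$ is close to the Hessian-type quantity $\int_0^1 \nabla^2\psi(\cdots)\,ds$, which is $O(1)$. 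Hence $|2^k(D_xG)y|\lesssim 2^k$. Therefore: if $|\xi|\ge B2^k$ with $B$ large, then $|\nabla_x\Phi|\ge|\xi|-C2^k\ge \tfrac12|\xi|\gtrsim\max\{2^k,|\xi|\}$; if $|\xi|\le B^{-1}2^k$, I need in addition a lower bound $|2^k(D_xG)y|\gtrsim 2^k$, which follows because $D_xG$ is within $O(\epsilon_0)+O(C^{-1})$ of the identity-like Hessian $D$ of the normalized phase (recall $\psi$ is the Legendre transform of $\phi=\frac12\xi^tD\xi+\mathcal E$, so $\nabla^2\psi$ is close to $D^{-1}$, a matrix with $\pm1$ entries), hence invertible with $\|(D_xG)^{-1}\|=O(1)$, giving $|2^k(D_xG)y|\ge c2^k|y|\ge \tfrac c2 2^k$; choosing $B$ large then yields $|\nabla_x\Phi|\ge \tfrac c4 2^k\gtrsim\max\{2^k,|\xi|\}$.

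With the lower bound $|\nabla_x\Phi|\gtrsim\max\{2^k,|\xi|\}$ in hand, I would run the standard integration-by-parts scheme: write $e^{i\Phi}=\mathcal{L}e^{i\Phi}$ with the transpose operator $\mathcal{L}^t g=\sum_j\partial_{x_j}\!\big(\frac{\partial_{x_j}\Phi}{|\nabla_x\Phi|^2}\,g\big)$ (times $-i$), and integrate by parts $M$ times in \eqref{mint}. Each application gains a factor $(\max\{2^k,|\xi|\})^{-1}$ from $1/|\nabla_x\Phi|$, provided the derivatives falling on $\nabla_x\Phi$, on $|\nabla_x\Phi|^{-2}$, and on the amplitude $\widetilde A(x-2^ky/t)$ do not cost more than $O(1)$. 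For the amplitude this is clear since $\widetilde A=\sum_{l\le N}t^{-l}A_l$ has all derivatives $O(1)$ uniformly; for the phase derivatives one checks that $\partial_x^\gamma\Phi$ for $|\gamma|\ge2$ equals $t\,(\partial^\gamma\psi)(x-2^ky/t)-t\,(\partial^\gamma\psi)(x)=O(2^k\|\partial^{|\gamma|+1}\psi\|_\infty)=O(2^k)$ again by the mean value theorem over a segment of length $2^k/t$, so after dividing by $|\nabla_x\Phi|\gtrsim\max\{2^k,|\xi|\}\ge c2^k$ these are $O(1)$. The same estimates handle $\partial_\xi^\alpha m$: differentiating \eqref{mint} in $\xi$ only brings down powers of $x$, which are $O(1)$ on the compact support of $\widetilde A(x-2^ky/t)$ (contained in $|x|\le 3/2+|2^ky/t|\le 3/2+C^{-1}$), so the same integration-by-parts bound applies verbatim, with constants uniform in $k$ and in $|y|\in[1/2,2]$.

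The main obstacle, and the only point that requires genuine care rather than bookkeeping, is the uniform invertibility and size control of the matrix $D_xG=\int_0^1\nabla^2\psi(x-s2^ky/t)\,ds$ in the low-frequency regime $|\xi|\le B^{-1}2^k$: one must be sure that the proximity of $\nabla^2\psi$ to the constant matrix $D^{-1}$ is quantified correctly from the normalization \eqref{normal} (via the implicit function theorem applied to $\nabla\phi(\eta(x))=-x$, giving $\nabla\eta=(\nabla^2\phi\circ\eta)^{-1}=(D+\nabla^2\mathcal E\circ\eta)^{-1}$, hence $\nabla^2\psi=\nabla\eta$ is within $O(\epsilon_0)$ of $D^{-1}$), and that this estimate holds on all of $B(0,15/8)$ — which is guaranteed since $\widetilde A$ is supported in $B(0,3/2)$ and $2^k/t$ is small, so all evaluation points $x-s2^ky/t$ stay inside $B(0,15/8)$ where $\eta$, hence $\psi$, is smooth. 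Once this is pinned down, the choice of the threshold constant $C$ in $\chi_k^c$ and of the separation constant $B$ is dictated by making $2^k/t\le C^{-1}$ small enough that $D_xG$ stays within, say, $1/2$ of $D^{-1}$ in operator norm, and the rest is routine.
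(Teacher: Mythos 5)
Your proposal is correct and follows essentially the same route as the paper: a two-sided bound $|t\nabla\psi(x-2^ky/t)-t\nabla\psi(x)|\sim 2^k$ obtained from the fact that the Hessian of $\psi$ is uniformly close to the nonsingular constant matrix $D$, giving $|\nabla_x\Phi|\gtrsim\max\{2^k,|\xi|\}$, followed by standard non-stationary phase integration by parts. The only difference is cosmetic — you make explicit, via the averaged Hessian $D_xG=\int_0^1\nabla^2\psi(\cdot)\,ds$ and its uniform invertibility, what the paper summarizes as ``it is easy to see''; the bookkeeping for the amplitude, the higher derivatives of the phase, and the $\partial_\xi^\alpha$ differentiations all match the paper's (implicit) argument.
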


\begin{proof}  To see this, we consider the phase function of the
integral in \eqref{mint}
 \[t\psi(x-\frac{2^ky}{t})-t\psi(x)-x\cdot
 \xi.\]
 From \eqref{psi} and \eqref{normal} we have $\nabla \psi(x)=\eta(x)=Dx+ \mathcal E(x)$
where $\| \mathcal E\|_{C^{L}(B(0,2))}\lesssim \epsilon_0$. The
Hessian matrix of $\psi$ is close to the matrix $D$.
 Since $|y|\sim 1$ and $\frac {2^k}t\ll 1$, it is easy to see \[\Big|\nabla
\psi(x-\frac{2^ky}{t})-\nabla \psi(x)\Big|= \Big|M  \frac{2^ky}t+
O(\epsilon_0 \frac{2^ky}t)\Big|\sim  \frac {2^k}t.\] Since  $
|\xi|\ge B 2^k$ or $ |\xi|\le B^{-1} 2^k$ for some large $B>0$, we
get
\[ |\nabla_x(t\psi(x-\frac{2^ky}{t})-t\psi(x)-x\cdot
 \xi)|\gtrsim \max(2^k, |\xi|).\]
Note that $\widetilde A(\cdot-\frac{2^ky}{t})$ is supported in $B(0,
7/4)$. By integration by part we get the desired inequality.
\end{proof}

\

Now we break
\[\widetilde { E_\psi} P_kf( t \cdot , t) =(I-\widetilde P_k) \widetilde { E_\psi} P_kf( t \cdot , t)
+ \widetilde P_k\widetilde { E_\psi} P_kf( t \cdot , t),\] where
$\widetilde P_k$ is a projection operator defined by $\mathcal F
(\widetilde P_k f)=\widetilde \beta(2^{-k}|\xi|) \widehat f(\xi)$
with $\widetilde \beta\in C_0^\infty(1/2B, 2B))$ satisfying
$\widetilde \beta=1$ on $(B^{-1}, B)$. By integration by parts with
\eqref{est}, it follows that if $1/2\le |y|\le 2$
\[|\mathcal F^{-1}\big((1-\widetilde \beta(2^{-k}|\cdot|))\,m(k,y,\cdot)\big)|\le C
2^{-Mk}(1+|x|)^{-M}\] for any $M$.  Hence, by \eqref{ftrans}
  we get
\begin{align*}
|(I-\widetilde P_k) \widetilde { E_\psi} P_kf( t x , t)| &\le C
t^{-\frac d2} 2^{-Mk} (1+|x|)^{-M} \int  |\beta(|y|) f^\vee(2^k y)
|dy.
\end{align*}
Then, by Schwarz's inequality and Plancherel's theorem,
$|(I-\widetilde P_k) \widetilde { E_\psi} P_kf( t x , t)|\le C
t^{-\frac d2} 2^{-Mk} (1+|x|)^{-M} \|P_kf\|_2$ and thus
\[\|\chi_k^c(t) (I-\widetilde P_k) \widetilde { E_\psi} P_kf(t\cdot,t)\|_{L^r_x}\le C t^{-\frac d2}2^{-Mk} \|P_kf\|_2.\]

Using this and  Littlewood-Paley inequality,  we see that
\begin{equation}\label{string}\begin{aligned} &\Big\| \sum_{k=1}^\infty \chi_{k}^c  \widetilde
{ E_\psi} P_k f (\cdot, t) \Big\|_{L^r_{x}}=t^{\frac{d}r}
\Big\|\sum_{k=1}^\infty \chi_{k}^c \widetilde { E_\psi} P_k f
(t\,\cdot, t) \Big\|_{L^r_x}
\\
\le & \,t^{\frac{d}r}\Big\| \sum_{k=1}^\infty \chi_{k}^c \widetilde
P_k \widetilde { E_\psi} P_k f (t\,\cdot, t) \Big\|_{L^r_x}+
t^{\frac{d}r} \Big\| \sum_{k=1}^\infty \chi_{k}^c (I-\widetilde P_k)
\widetilde { E_\psi} P_k f (t\,\cdot, t) \Big\|_{L^r_x}
\\
\le &\, C t^{\frac{d}r} \chi_{k}^c(t)\Big\| \Big(\sum_{k=1}^\infty |
\widetilde { E_\psi} P_k f (t\,\cdot, t)|^2\Big)^\frac12
\Big\|_{L^r_x} +C t^{\frac{d}r-\frac d2} \sum_{k=1}^\infty 2^{-Mk} \|f\|_2.
 \end{aligned}\end{equation}
Since $q,r\ge 2$, taking integration in $t$, by Minkowski's
inequality we get
\begin{align*} \Big\| \sum_{k=1}^\infty \chi_{k}^c   \widetilde { E_\psi} P_k f (\cdot, t)
&\Big\|_{L^q_{t}L_x^r}\le C \Big(\sum_{k=1}^\infty \Big\| t^\frac
dr\chi_{k}^c\| \widetilde { E_\psi} P_k f (t\,\cdot,
t)\|_{L^r_{x}}\Big\|_{L^q_{t}}^2\Big)^\frac12 \\&+ \sum_{k=1}^\infty
2^{-Mk} \|P_kf\|_2 \|t^{\frac dr-\frac d2}\|_{q} .\end{align*} The
second term in the right hand side is clearly bounded by
$C\|f\|_{H^s}$ if $d/r+1/q<d/2$. Therefore we are reduced to showing
that
\begin{equation}\label{final}
\Big(\sum_{k=1}^\infty \Big\| t^\frac dr\chi_{k}^c\| \widetilde {
E_\psi} P_k f (t\,\cdot, t)\|_{L^r_{x}}\Big
\|_{L^q_{t}}^2\Big)^\frac12\le C\|f\|_{H^s}.
\end{equation}

For this we use the following.

\begin{lem} If $t\ge B2^{2k}$ for some large $B>0$, then for $1\le r\le \infty$
\begin{equation}\label{lplp}\| \widetilde{E_\psi} P_k f (t\,\cdot, t)\|_{L^r_{x}}\le Ct^{-\frac d2} \|P_k f\|_r.\end{equation}
\end{lem}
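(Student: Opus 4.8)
The plan is to show that, once the modulation $e^{it\psi(x)}$ is factored out and the phase is Taylor expanded, the hypothesis $t\ge B2^{2k}$ forces $\widetilde{E_\psi}P_kf(t\cdot,t)$ to be $t^{-d/2}$ times $P_kf$ composed with the diffeomorphism $x\mapsto\nabla\psi(x)$ and averaged against an $L^1$-normalized kernel; the asserted $L^r$ bound, $1\le r\le\infty$, will then drop out from a change of variables and Young's inequality.

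First I would record the exact Taylor identity $t\psi(x-y/t)=t\psi(x)-\nabla\psi(x)\cdot y+R_k(x,y)$, with
\[R_k(x,y)=t^{-1}\int_0^1(1-s)\,y\cdot H\psi(x-sy/t)\,y\,ds.\]
As in the proof of the previous lemma, $\nabla\psi(x)=\eta(x)=Dx+\mathcal E(x)$ with $\|\mathcal E\|_{C^{L}(B(0,2))}\lesssim\epsilon_0$, so $H\psi$ is close to $D$ on $B(0,2)$. Recall $(P_kf)^\vee(y)=\beta(2^{-k}|y|)f^\vee(y)$ is supported in $|y|\le 2^{k+1}$, where $|y/t|\lesssim B^{-1}$; together with the factor $\widetilde A(x-y/t)$ (recall $\widetilde A=\sum_{l\le N}t^{-l}A_l$ with $A_l\in C_c^\infty(B(0,3/2))$) this makes the relevant integrand vanish unless $x\in B(0,13/8)\subset B(0,15/8)$, a ball on which $\eta$ is a diffeomorphism with $|\det H\psi|\sim1$. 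On the region $|y|\lesssim 2^{k}$, $x\in B(0,13/8)$, $t\ge B2^{2k}$, one then has $|\partial_y^\alpha R_k(x,y)|\lesssim 2^{-k|\alpha|}$ for every $\alpha$ — each $\partial_y$ either trades a factor $|y|\sim 2^k$ for a constant or produces a factor $t^{-1}\lesssim 2^{-2k}$ — and likewise the $y$-derivatives of $\widetilde A(x-y/t)$ are $O(t^{-|\alpha|})=O(2^{-k|\alpha|})$.

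Next I would fix $\rho\in C_c^\infty(B(0,4))$ with $\rho\equiv1$ on $B(0,2)$ and put $b_x(y)=\rho(2^{-k}y)\,e^{iR_k(x,y)}\,\widetilde A(x-y/t)$. Since $\rho(2^{-k}\cdot)\equiv1$ on $\supp (P_kf)^\vee$, the Taylor identity yields
\[\widetilde{E_\psi}P_kf(tx,t)=t^{-d/2}\int e^{-i\nabla\psi(x)\cdot y}\,b_x(y)\,(P_kf)^\vee(y)\,dy.\]
By the bounds above $b_x$ is supported in $|y|\le2^{k+2}$ with $\|\partial_y^\alpha b_x\|_\infty\lesssim 2^{-k|\alpha|}$, uniformly in $x\in B(0,13/8)$ and $t\ge B2^{2k}$, so integration by parts gives $|\widehat{b_x}(\zeta)|\le\omega_k(\zeta):=C_M 2^{kd}(1+2^k|\zeta|)^{-M}$ with $\|\omega_k\|_1=C_{M,d}$ independent of $k$. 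Expanding $b_x$ by Fourier inversion and using $\int e^{-iy\cdot u}(P_kf)^\vee(y)\,dy=\widehat{(P_kf)^\vee}(u)=P_kf(u)$, I would obtain
\[\widetilde{E_\psi}P_kf(tx,t)=c\,t^{-d/2}\int\widehat{b_x}(\zeta)\,P_kf(\nabla\psi(x)-\zeta)\,d\zeta,\]
and hence the pointwise bound $|\widetilde{E_\psi}P_kf(tx,t)|\lesssim t^{-d/2}\,(\omega_k*|P_kf|)(\nabla\psi(x))$. Since this vanishes for $x\notin B(0,13/8)$, the substitution $u=\nabla\psi(x)$, whose Jacobian $|\det D\nabla\psi(x)|=|\det H\psi(x)|$ is $\sim1$, followed by Young's inequality gives, for every $1\le r\le\infty$,
\[\|\widetilde{E_\psi}P_kf(t\cdot,t)\|_{L^r_x}\lesssim t^{-d/2}\|\omega_k*|P_kf|\|_{L^r}\le Ct^{-d/2}\|\omega_k\|_1\|P_kf\|_{L^r}=Ct^{-d/2}\|P_kf\|_{L^r},\]
which is \eqref{lplp}.

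The crux — and the only place the hypothesis $t\ge B2^{2k}$ is used — is the uniform slow-variation estimate $\|\partial_y^\alpha b_x\|_\infty\lesssim 2^{-k|\alpha|}$: it makes the quadratic-and-higher remainder $R_k$ in the phase and the $y$-dependence of $\widetilde A(x-y/t)$ negligible at the spatial scale $2^k$ carried by $(P_kf)^\vee$, so that $\widetilde{E_\psi}$ acts (to leading order, with the $t^{-d/2}$ normalization) merely by precomposition with $\nabla\psi$; once this is in hand, the change of variables and Young's inequality are routine. The remaining care is bookkeeping: checking that all the derivative and support estimates are uniform in $x$ over $B(0,13/8)$ and in $t\ge B2^{2k}$, and that $\psi$ and $\eta$ are defined on the slightly enlarged balls where they are invoked.
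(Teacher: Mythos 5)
Your proof is correct and follows essentially the same route as the paper's. You Taylor-expand the phase to isolate the linear term $-\nabla\psi(x)\cdot y$ and observe that the quadratic remainder $R_k$ and the $y$-dependence of $\widetilde A(x-y/t)$ are slowly varying at scale $2^k$ once $t\ge B2^{2k}$, so that $\widehat{b_x}$ is an $L^1$-normalized kernel concentrated at scale $2^{-k}$; the paper reaches the identical conclusion by rescaling $y\to 2^k y$ and integrating by parts directly in the oscillatory integral defining the kernel $\mathcal K(x,z,k)$, obtaining the bound $|\mathcal K(x,z,k)|\lesssim 2^{kd}(1+2^k|z-\nabla\psi(x)|)^{-N}$, which is precisely your $\omega_k(z-\nabla\psi(x))$. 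The concluding steps (support of the kernel inside a ball where $\nabla\psi$ is a diffeomorphism with bounded Jacobian, then Young's/Schur) coincide.
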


\begin{proof}
Since $(P_k f)^\vee$ is supported in $\{y:2^{k-1}\le |y|\le
2^{k+1}\} $, we may insert a harmless smooth function $\beta_\circ$
so that
\[\widetilde { E_\psi} P_kf(x,t)= e^{-it\psi(\frac xt)} t^{-\frac d2}\int
e^{it\psi(\frac{x-y}{t})} \widetilde A(\frac{x-y}{t})
\beta_\circ(2^{-k} |y| ) (P_k f)^\vee(y) dy,
\]
where $\beta=\beta_\circ\beta$  and $\beta_\circ$ is supported in
$[2^{-2}, 2^2]$. By rescaling we have
\[ \widetilde { E_\psi} P_kf(tx,t)= t^{-\frac d2}\int \mathcal K(x,z,k) P_kf(z) dz,
\]
where
\[\mathcal K(x,z,k)= (2\pi)^{-d} 2^{dk}e^{-it\psi(x)}\int
e^{it\psi(x-\frac{2^ky}{t})} e^{i2^k z \cdot y} \widetilde
A(x-\frac{2^ky}{t})  \beta_\circ (|y|) dy.\] Since $|y|\sim 1$ and $
t\ge B2^{2k}$,  considering the phase part of this integral, we see
that $\nabla_y\Big(t\psi(x-\frac{2^ky}{t})+2^k z y\Big)
=2^k(z-\nabla \psi(x))+O(1).$ Therefore, by integration by parts we
get
\[|\mathcal K(x,z,k)|\le C2^{kd}
(1+2^k|z-\nabla \psi(x)|)^{-N}.\] Since $\widetilde A$ is supported
in $B(0,3/2)$, it follows that  $\supp \mathcal K(\cdot,z,k)\subset
B(0,7/4)$ if $B$ is sufficiently large. From \eqref{psi} $x\to
\nabla\psi(x)$ is a diffeomorphism on $B(0,15/8)$. Hence it is easy
to see $\int |\mathcal K(x,z,k)| dx<C$. Clearly, $\int |\mathcal
K(x,z,k)| dz$ $\le C$.  Then, \eqref{lplp} follows from Young's
inequality.
\end{proof}

We now return to the proof of \eqref{final}.  We  break  $\chi_k ^c$
to $\chi_{[C2^k, C2^{2k}]} + \chi_{[C2^{2k}, \infty)}$ so that
\begin{align*}
 \Big(\sum_{k=1}^\infty \Big\| t^\frac
dr\chi_{k}^c\| \widetilde { E_\psi} &P_k f (t\,\cdot,
t)\|_{L^r_{x}}\Big\|_{L^q_{t}}^2\Big)^\frac12 \le  \Big (
\sum_{k=1}^\infty \Big\| t^\frac dr\chi_{[C2^k, C2^{2k}]} (t)\|
\widetilde { E_\psi}  P_k f (t\,\cdot,
t)\|_{L^r_{x}}\Big\|_{L^q_{t}}^2\Big)^\frac12 \\ &+
\Big(\sum_{k=1}^\infty \Big\| t^\frac dr\chi_{[C2^{2k}, \infty)}\|
\widetilde { E_\psi} P_k f (t\,\cdot,
t)\|_{L^r_{x}}\Big\|_{L^q_{t}}^2\Big)^\frac12.
\end{align*}
By rescaling we note that the first term of right hand side equals
\[
 \Big(\sum_{k=1}^\infty
\| \widetilde { E_\psi} P_k f \|_{_{{{L_t^q([C2^k, C2^{2k}],
L_x^r(\mathbb R^d))}}}}^2\Big)^\frac12.\] By \eqref{interpol} it
follows that this is bounded by $C \Big (\sum_{k=1}^\infty
2^{2k(\frac{d}r+\frac 2q-\frac d2)}\|P_k f \|_{2}^2\Big
)^\frac12\lesssim \|f\|_{H^{s_c}}. $ So, we only need to consider
the second term.  By making use of \eqref{lplp} we see
\begin{align*}
&\Big(\sum_{k=1}^\infty \| t^\frac dr\chi_{[C2^{2k}, \infty)}\|
\widetilde { E_\psi}P_k f (t\,\cdot,
t)\|_{L^r_{x}}\|_{L^q_{t}}^2\Big )^\frac12\le  C
\Big(\sum_{k=1}^\infty \| t^\frac dr t^{-\frac d2} \chi_{[C2^{2k},
\infty)}\|_{L^q_{t}}^2 \|P_k f\|_r^2\Big)^\frac12
\\
&\le  C \Big(\sum_{k=1}^\infty  2^{4k(\frac {d} r+\frac 1q-\frac
d2)} \|P_k f\|_r^2\Big)^\frac12 \le  C \Big(\sum_{k=1}^\infty
2^{2k(\frac {d} r+\frac 2q-\frac d2)} \|P_k f\|_2^2\Big)^\frac12 \le
C\|f\|_{H^{s_c}}.
\end{align*}
For the last inequality we use Bernstein's inequality $\|P_k
f\|_q\le C2^{k(\frac d2-\frac dq)}\|P_kf\|_2$. This completes the
proof of \eqref{final}.

\subsubsection*{Proof of weak type endpoint estimate} Let us
fix $q,r$ such that $2\le q,r<\infty$ and $d/r+1/q=d/2$. The proof
here is a minor modification of that of \eqref{mixed}. So we shall
be brief.

As before  it suffices to show $\norm{ E (
f)}_{{L_t^{q,\infty}((0,\infty),\, L_x^r(\mathbb R^d))}}\les
\norm{f}_{H^{s_c}}.$ Because of \eqref{interpol} it is enough to
prove that
\[ \norm{ E ( f)}_{{L_t^{q,\infty}((1,\infty),\, L_x^r(\mathbb R^d))}}\les
\norm{f}_{H^{s_c}}.\] By \eqref{pw} it follows that
\[ \|E P_{\le0} f\|_{{L_t^{q,\infty}((1,\infty),\,
L_x^r(\mathbb R^d))}}\le  C\|t^{-\frac d2+\frac d
r}\|_{{L_t^{q,\infty}(1,\infty)}} \| f\|_2\le C\|f\|_2\] because
$t^{-\frac d2+\frac d r}=t^{-\frac1q}\in L^{q,\infty}(0,\infty)$.
Breaking $\sum_{k=1}^\infty E P_k f = \sum_{k=1}^\infty
\chi_{k}^\circ  E P_k f + \sum_{k=1}^\infty \chi_{k}^c  E P_k f,$
for the first sum we get the desired bound by the same argument as
before because $L^q\subset L^{q,\infty}$. Hence, it is enough to
show
\begin{equation}\label{reduced1}
\Big\|\sum_{k=1}^\infty \chi_{k}^c  E P_k f
\Big\|_{{L_t^{q,\infty}((1,\infty),\, L_x^r(\mathbb R^d))}} \le
C\|f\|_{H^{s_c}}.
\end{equation}

Decomposing further $\chi_{k}^c  E P_k f= \chi_{k}^c \widetilde E
P_k f+ \chi_{k}^c \mathcal R P_k f$, the contribution from $
\chi_{k}^c  \mathcal  R P_k f$ is controlled by the bound obtained
previously. So it is sufficient to show that
\[\Big\|\sum_{k=1}^\infty \chi_{k}^c  \widetilde { E_\psi} P_k f \Big\|_{{L_t^{q,\infty}((1,\infty),\,
L_x^r(\mathbb R^d))}}\le C\|f\|_{H^{s_c}}.\] Since $q>2$,
$L^{q/2,\infty}$ is a Banach space. Using Minkowski's and triangle
inequalities, from \eqref{string} we get
\begin{align*}
\Big\| \sum_{k=1}^\infty \chi_{k}^c   \widetilde { E_\psi} P_k f
(\cdot, t) &\Big\|_{{L_t^{q,\infty}((1,\infty),\, L_x^r(\mathbb
R^d))}}\le C \Big(\sum_{k=1}^\infty \Big \| t^\frac dr\chi_{k}^c\|
\widetilde { E_\psi} P_k f (t\,\cdot, t)\|_{L^r_{x}}\Big
\|_{L^{q,\infty}_{t}}^2\Big)^\frac12 \\&+ \sum_{k=1}^\infty 2^{-Nk}
\|P_kf\|_2 \|t^{\frac dr-\frac d2}\|_{L^{q,\infty}} .\end{align*}
Since $t^{\frac dr-\frac d2}\in L^{q,\infty}(0,\infty)$, the second
term on the right hand side is bounded by $C\|f\|_{H^{s_c}}$ and
\[\Big\| t^\frac dr\chi_{[C2^{2k}, \infty)}\|
\widetilde { E_\psi}P_k f (t\,\cdot,
t)\|_{L^r_{x}}\Big\|_{L^{q,\infty}_{t}} \le C2^{k(\frac {d} r+\frac
2q-\frac d2)} \|P_k f\|_2.\]
As before this follows from \eqref{lplp} and  Bernstein's
inequality. This completes the proof.

\section{Strichartz estimates: Proofs of Corollaries}

In this section we prove Corollary \ref{str}, \ref{str2} and
\ref{str3}.

\subsubsection*{Proof of Corollary \ref{str}}
Since $q, r \ge 2$,
 using Littlewood-Paley theory and Minkowski's inequality, we have
\begin{align*}
&\qquad\|e^{it(-\Delta)^{ \alpha/ 2}}\varphi\|_{L_t^q(\mathbb R,\,
L_x^r(\mathbb R^d))}^2  \lesssim \sum_{k \in \mathbb Z}
\|e^{it(-\Delta)^{\alpha/2}} P_k \varphi\|_{L_t^q(\mathbb R,\,
L_x^r(\mathbb R^d))}^2.
\end{align*}
We  observe that $ e^{it(-\Delta)^{ \alpha/ 2}}P_kg(x) =
2^{dk}e^{it2^{\alpha k}(-\Delta)^{ \alpha/ 2}}P_0g_{2^{k}}(2^kx),$
where $g_{\lambda}(x) = \lambda^{-d}g( x/\lambda). $ Since $0<s_c <
1/2$, recalling \textit{Remark}\,\ref{remark1}, from rescaling and
Theorem \ref{xed}  we get
\begin{align*}
\|e^{it(-\Delta)^{ \alpha/ 2}}\varphi\|_{L_t^q(\mathbb R,\,
L_x^r(\mathbb R^d))}^2 &\lesssim \sum_{k \in \mathbb
Z}2^{2k(d-\frac{d}{r}-\frac{\alpha}{q})}\|e^{it(-\Delta)^{\alpha/2}}P_0
\varphi_{2^{k}}\|_{L_t^q(\mathbb R,\,
L_x^r(\mathbb R^d))}^2\\
&\lesssim \sum_{k \in \mathbb Z}
2^{2k(d-\frac{d}{r}-\frac{\alpha}{q})}\|\beta\widehat
\varphi(2^k\cdot)\|_{\dot H^{s_c}}^2.
\end{align*}

By Plancherel's theorem and rescaling (note that
$P_0\varphi_{2^k}=2^{-kd}(P_k\varphi)(2^{-k}x)$) it follows that
\begin{align*}
\|e^{it(-\Delta)^{ \alpha/ 2}}\varphi\|_{L_t^q(\mathbb R,\,
L_x^r(\mathbb R^d))}^2 &\lesssim \sum_{k \in \mathbb Z}
2^{2k(d-\frac{d}{r}-\frac{\alpha}{q})} \||x|^{s_c} P_0
\varphi_{2^{k}}\|_2^2
\\&\lesssim \sum_{k \in \mathbb Z}
2^{2k(2-\alpha)/q}\||x|^{s_c} P_k \varphi\|_{2}^2.
\end{align*}

We define $\mathcal P_k$ by $\mathcal F( \mathcal
 P_k f)= \beta_\circ(2^{-k}|\cdot|)\widehat f$ so that $\mathcal
P_k P_k=P_k$. (Here $\beta_\circ$ is a smooth function supported in
 $[2^{-2}, 2^2]$ such that $\beta_\circ\beta=\beta$.)
 Also we set $\widetilde P_k= 2^{k\frac{(2-\alpha)}q} (-\Delta)^{\frac{\alpha-2}{2q}} \mathcal P_k.$
 Hence we get
\[\|e^{it(-\Delta)^{ \alpha/
2}}\varphi\|_{L_t^q(\mathbb R,\, L_x^r(\mathbb R^d))}^2 \lesssim
\int \big(\sum_{k \in \mathbb Z} |\widetilde P_k P_k
(-\Delta)^{\frac{2-\alpha}{2q}} \varphi|^2\big) |x|^{2s_c} dx.\]
Since $0<s_c < 1/2$, $|x|^{2s_c}$ is an $A_2$-weight (see
\cite[p.219]{st2}). Thus by a vector valued inequality for $A_p$
weight (e.g. \cite[Remarks 6.5, p. 521]{garu}) it follows that
\[\|e^{it(-\Delta)^{ \alpha/
2}}\varphi\|_{L_t^q(\mathbb R,\, L_x^r(\mathbb R^d))}^2 \lesssim\int
\big(\sum_{k \in \mathbb Z} |P_k (-\Delta)^{\frac{2-\alpha}{2q}}
\varphi|^2\big)|x|^{2s_c} dx.\] By Littlewood-Paley theory (e.g.
\cite[p. 275]{Ru}, \cite{tri}) in weighted $L^p$ spaces the right
hand side  is bounded by
\[C\int |(-\Delta)^{\frac{2-\alpha}{2q}}\varphi|^2|x|^{2s_c} dx.\]
Therefore we get the desired
inequality.
\qed

\subsubsection*{Proof of Corollary \ref{str2}} In order to prove Corollary \ref{str2}, it is
sufficient to show that
\[  \|e^{it\sqrt{-\Delta}}\varphi\|_{L_t^q(\mathbb R,\, L_x^r(\mathbb R^d))} \le C\||x|^{s_c^w} \varphi\|_{2}\]
whenever $\widehat \varphi$ is supported in $\{\xi: 1/2< |\xi|<2\}$.
Once it is established, the rest of proof is identical with that of
Corollary \ref{str}. By a finite decomposition, rotation and
rescaling we may assume that $\widehat f$ is supported in
$\Gamma=\{\xi=(\bar \xi,\xi_d): |\bar \xi|< \xi_d/100,\,1/2< \xi_d<2
\}$. Let us set
\[\mathcal T\varphi(x,t)
=\int_\Gamma e^{ix\cdot \xi+ it(|\xi|-\xi_d) } \widehat \varphi(\xi)
d\xi=\int_\Gamma e^{i\bar x\cdot \bar \xi+ i x_d \xi_d+
it\xi_d\theta(\bar \xi/\xi_d) } \widehat \varphi(\xi) d\xi\] with
$\theta(\eta)=\sqrt{1+|\eta|^2}-1$. Then, by a simple change of
variables $x_d\to x_d-t$ it is enough to show that
\begin{equation}\label{wave} \|\mathcal T\varphi\|_{L_t^q(\mathbb R,\,
L_x^r(\mathbb R^d))} \le C\||\bar x|^{s_c^w}
\varphi\|_{2}\end{equation} provided that $\supp\widehat
\varphi\subset \Gamma$. By the Hausdorff-Young inequality in $x_d$
and Minkowski's inequality, the left hand side is bounded by
\begin{align*}
C\Big(\int_{1/2}^2\Big\| \int_{|\bar\xi|\le \frac1{50}}  e^{i\bar
x\cdot \bar \xi+ it\xi_d\theta(\bar \xi/\xi_d)} \widehat
\varphi(\bar\xi, \xi_d)d\bar\xi\Big\|_{L_t^q(\mathbb R,\, L_{\bar
x}^r(\mathbb R^{d-1}))}^{r'} d\xi_d\Big)^\frac1{r'}.
\end{align*}
Freezing $\xi_d\in (1/2,2)$, the Hessian matrix of
$\theta(\cdot/\xi_d)$ is non singular. So, we apply  Theorem
\ref{xed} to the extension operator defined by
$\theta(\cdot/\xi_d)$.  In fact, since $\theta(\eta)$ is close to
$\frac12|\eta|^2$ and $1/2\le \xi_d\le 2$, it is easy to see that
there is a uniform bound $C$ independent of $\xi_d$ so that
\[\Big\| \int_{|\bar\xi|\le \frac1{50}}  e^{i\bar x\cdot \bar \xi+
it\xi_d\theta(\bar \xi/\xi_d)}  g(\bar \xi)
d\bar\xi\Big\|_{L_t^q(\mathbb R,\, L_{\bar x}^r(\mathbb
R^{d-1}))}\le C\|(1-\Delta_{\bar x})^{s_c^w/2}  g\|_2.\] Therefore,
recalling \textit{Remark}\,\ref{remark1} and taking integration in $\xi_d$,
we get
\[  \|\mathcal T\varphi\|_{L_t^q(\mathbb R,\, L_x^r(\mathbb R^d))} \le C
\Big(\int_{1/2}^2\Big\| (-\Delta_{\bar x})^{s_c^w/2} \widehat
\varphi(\bar\xi, \xi_d) \Big\|_{L^2}^{r'} d\xi_d\Big)^{\frac
1{r'}}.\] Then \eqref{wave} follows by Plancherel's theorem and
H\"older's inequality. \qed

\subsubsection*{Proof of Corollary \ref{str3}} %
For the proof it suffices to show the case $q=r$. The other cases follow
from interpolation with the estimate
$\|(-\Delta)^{{\gamma_1}(\infty, 2, d)/2}
e^{it\sqrt{-\Delta}}\varphi\|_{L_t^\infty(\mathbb R,\, L_x^2(\mathbb
R^d))}$ $\lesssim C\|\varphi\|_{H^{0}_{sph}}$. By Littlewood-Paley
theory it is enough to show that
\[\|e^{it\sqrt{-\Delta}}\varphi\|_{L_t^q(\mathbb R,\, L_x^r(\mathbb
R^d))}\lesssim C\|\varphi\|_{H^{s_c^w}_{sph}}\] for $\varphi$ of
which Fourier transform is supported in $\{\xi:1/2\le |\xi|\le 2\}$.
For this we write
\[e^{it\sqrt{-\Delta}}\varphi(x)=\frac1{(2\pi)^{d}}\int_{\frac12}^2 \int_{\ball^{d-1}} e^{i\rho  x\cdot\omega +it\rho}
\widehat \varphi(\rho\omega) d\omega \rho^{d-1} d\rho.\] By the
Hausdorff-Young inequality in $t$ and Minkowski's inequality we get
\[\|e^{it\sqrt{-\Delta}}\varphi\|_{L_x^q(\mathbb R^d,\, L_t^q(\mathbb
R))}\lesssim  \| R^*(\widehat \varphi(\rho\cdot))(\rho\cdot)
\|_{L_\rho^{q'}((1/2,2):L_x^q(\mathbb R^d))}.\] Theorem
\ref{thm-sphere} and H\"older's inequality imply
\begin{align*}\|e^{it\sqrt{-\Delta}}\varphi\|_{L_x^q(\mathbb R^d,\,
L_t^q(\mathbb R))}
 &\lesssim  \|(1-\Delta_{\omega}) ^{s_c^w/2}
\widehat \varphi(\rho\omega))
\|_{L_\rho^{2}((1/2,2):L_\omega^2(\mathbb S^{d-1}))} .\end{align*}
Now recalling $(1-\Delta_{\omega}) ^{\nu/2} \widehat g= \mathcal
F((1-\Delta_{\omega}) ^{\nu/2}  g)$, we get the desired inequality
by Plancherel's theorem. \qed

\subsubsection*{Acknowledgment} Y. Cho is supported by NRF grant 2011-0005122 (Republic of
Korea). Z. Guo is supported in part by NNSF of China (No.11371037),
Beijing Higher Education Young Elite Teacher Project (No. YETP0002),
and Fok Ying Tong education foundation (No. 141003). S. Lee is
supported in part by NRF Grant 2012008373 (Republic of Korea). The
second named author would like to thank J. Bourgain for a discussion
about the restriction estimate and his encouragement.

 {\bibliographystyle{plain}

\begin{thebibliography}{lee}
\bibitem{ba} B. Barcelo, \emph{On the restriction of the Fourier
transform to a conical surface}, Trans. Amer. Math. Soc.
\textbf{292} (1985), 321--333.

\bibitem{BCT} J. Bennett, A. Carbery\ and\ T. Tao, \emph{On the multilinear restriction and Kakeya conjectures}, Acta Math. {\bf 196} (2006),   261--302.


\bibitem{b1} J. Bourgain,
\emph{Besicovitch type maximal operators and applications to Fourier
analysis}, Geom. Funct. Anal. \textbf{1} (1991), 147--187.

\bibitem{bogu} J. Bourgain and L. Guth, \emph{Bounds on oscillatory integral operators based on multilinear estimates,} Geom. Funct. Anal. {\bf 21} (2011), 1239--1295.

\bibitem{cl}  Y. Cho and S. Lee, \emph{Strichartz estimates in spherical
coordinates}, Indiana Univ. Math. J. {\bf 62} (2013), 991--1020.

\bibitem{jwy} J. Jiang,  C. Wang and X. Yu, \emph{Generalized and weighted Strichartz
estimates}, Communications on Pure and Applied Analysis \textbf{11}  (2012),
1723--1752.

\bibitem{fef} C. Fefferman, \emph{Inequalities for strongly singular convolution operators,} Acta Math.
{\bf 124} (1970), 9--36.

\bibitem{garu} J. Garcia-Cuerva and J.L. Rubio de Francia, \emph{Weighted norm inequality and related topics}, North-Holland, 1985.

\bibitem{glnw}  Z. Guo, S. Lee, K. Nakanishi and C. Wang,
\emph{Generalized Strichartz Estimates and Scattering for 3D
Zakharov System}, Comm. Math. Phys. {\bf 331} (2014), 239--259.

\bibitem{Guth} L. Guth, \emph{A restriction estimate using polynomial
partitioning}, arXiv:1407.1916.

\bibitem{Hor} L. H\"ormander, \emph{The analysis of linear partial
differential operators $I$: Distribution Theory and Fourier
Analysis}, 2nd ed., Springer-Verlag, 1990.


\bibitem{kt} M. Keel and T. Tao, \emph{Endpoint Strichartz estimates}, Amer. J. Math. {\bf 120}
(1998), 955--980.


\bibitem{lee} S. Lee, \emph{Bilinear restriction estimates for surfaces with curvatures
of different signs},  Trans. Amer. Math. Soc. \textbf{358} (2006),
 3511--3533.

 \bibitem{lv} S. Lee and A. Vargas,  \emph{Restriction estimates for some surfaces with vanishing curvatures,}
 J. Funct. Anal. \textbf{258} (2010), 2884--2909.

  \bibitem{lrv} S. Lee, K. Rogers and A. Seeger, \emph{On space-time estimates for the Schr\"odinger operator,}
  J. Math. Pures Appl. (9) \textbf{99} (2013), 62--85.

\bibitem{Ru} S. Roudenko, \emph{Matrix-weighted Besov Spaces}, Trans. Amer. Math. Soc \textbf{355} (2002), 273--314.

\bibitem{st1} E. M. Stein, \emph{Oscillatory integrals in Fourier
analysis,} Beijing Lectures in Harmonic Analysis, Annals of Math.
Study 112, 307--355,  Princeton Univ. Press  (1986).

\bibitem{st2} \bysame, \textit{Harmonic analysis: real-variable methods, orthogonality, and
oscillatory integrals},   Princeton University Press, 1993.

\bibitem{ster} J. Sterbenz, {\it Angular regularity and Strichartz estimates for the wave equation},
with an appendix by Igor Rodnianski, Int. Math. Res. Not. 2005,
no.4, 187-231.

\bibitem{taylor} M. E. Taylor, \emph{Partial differential equations 1: Basic theory},  2nd ed., Springer, 2011.


\bibitem{ta1} T. Tao,
\emph{A Sharp  bilinear restriction estimate for paraboloids}, Geom.
Funct. Anal. \textbf{13} (2003), 1359--1384.

\bibitem{tv1} T. Tao and A. Vargas,
\emph{A bilinear approach to cone multipliers I. Restriction
estimates}, Geom. Funct. Anal. \textbf{10} (2000), 185--215.

\bibitem{tvv} T. Tao,  A. Vargas and L. Vega,
\emph{A bilinear approach to the restriction and Kakeya conjecture},
J. Amer. Math. Soc. \textbf{11} (1998), 967--1000.


\bibitem{to} P. Tomas, \emph{A restriction theorem for the Fourier transform,}
Bull. Amer. Math. Soc. \textbf{81} (1975), 477--478

\bibitem{tri} H. Triebel, \emph{Theory of Function Spaces}, Birkh\"auser, 1983.

\bibitem{v} A. Vargas, \emph{Restriction theorems for a surface
with negative curvature},  Math. Z. \textbf{249} (2005), 97--111.

 \bibitem{w1} T.  Wolff, \emph{An improved bound for Kakeya type maximal functions,}
Rev. Mat. Iberoamericana \textbf{11} (1995),  651--674.

\bibitem{w2} \bysame,  \emph{A sharp cone restriction
estimate},  Annals of Math. \textbf{153} (2001),  661--698.

\bibitem{zy} A. Zygmund, \emph{On Fourier coefficients and transforms of functions
of two variables,} Studia Math.  \textbf{50} (1974), 189--201.


\end{thebibliography}

\end{document}